\providecommand{\MR}{\relax\ifhmode\unskip\space\fi MR }
\theoremstyle{plain}
\newtheorem{theorem}{Theorem}
\newtheorem*{theorem-nn}{Theorem}
\newtheorem{corollary}[theorem]{Corollary}
\newtheorem{claim}{Claim}[theorem]
\newtheorem{lemma}[theorem]{Lemma}
\newtheorem{proposition}[theorem]{Proposition}
\author{Fábio Botler$^1$ \and
        Wanderson Lomenha$^1$ \and
        João Pedro de Souza$^{1,2}$}
\date{Program System Engineering and Computer Science, COPPE\\
$^1$Universidade Federal do Rio de Janeiro \\
$^2$Colégio Pedro II}
\title{On nonrepetitive colorings of paths and cycles\thanks{
Botler is supported by CNPq (304315/2022-2),
by FAPERJ (211.305/2019 and 201.334/2022) and by UFRJ (ALV 23.733);
Lomenha is supported by FAPERJ (201.575/2023).
This study was financed in part by the Coordena\c{c}\~ao de Aperfei\c{c}oamento de Pessoal de N\'ivel Superior, Brasil (CAPES), Finance Code~001.
FAPERJ is the Rio de Janeiro Research Foundation. 
CNPq is the National Council for Scientific and Technological
Development of Conselho Nacional de Desenvolvimento Cient\'{i}fico e
Tecnol\'{o}gico do Brasil.}}
\begin{document}
\maketitle

\begin{abstract}
    We say that a sequence \(a_1 \cdots a_{2t}\)
    of integers is \emph{repetitive}
    if \(a_i = a_{i+t}\) for every \(i\in\{1,\ldots,t\}\).
    A \emph{walk} in a graph \(G\)
    is a sequence \(v_1 \cdots v_r\)
    of vertices of \(G\)
    in which \(v_iv_{i+1}\in E(G)\) for every \(i\in\{1,\ldots,r-1\}\).
    Given a \(k\)-coloring \(c\colon V(G)\to\{1,\ldots,k\}\) of \(V(G)\),
    we say that \(c\) is \emph{walk-nonrepetitive} (resp. \emph{stroll-nonrepetitive})
    if for every \(t\in\mathbb{N}\) and every walk \(v_1\cdots v_{2t}\)
    the sequence \(c(v_1) \cdots c(v_{2t})\) is not repetitive
    unless \(v_i = v_{i+t}\) for every \(i\in\{1,\ldots,t\}\)
    (resp. unless \(v_i = v_{i+t}\) for some \(i\in\{1,\ldots,t\}\)).
    The \emph{walk} (resp. \emph{stroll}) \emph{chromatic number} \(\sigma(G)\) (resp. \(\rho(G)\)) of \(G\)
    is the minimum \(k\) for which \(G\) has a walk-nonrepetitive (resp. stroll-nonrepetitive) 
    \(k\)-coloring.
    Let \(C_n\) and \(P_n\) denote, respectively, the cycle and the path with \(n\) vertices.
    In this paper we present three results that answer questions posed by Barát and Wood in 2008:
    (i) \(\sigma(C_n) = 4\) whenever \(n\geq 4\) and \(n \notin\{5,7\}\);
    (ii) \(\rho(P_n) = 3\) if \(3\leq n\leq 21\) and \(\rho(P_n) = 4\) otherwise;
    and (iii) \(\rho(C_n) = 4\), whenever \(n \notin\{3,4,6,8\}\), and \(\rho(C_n) = 3\) otherwise.
    In particular, (ii) improves bounds on \(n\) obtained by Tao in 2023. 
\end{abstract}

\section{Introduction}

In this paper, we consider only finite and undirected graphs without loops or multiple edges. 
Let $G$ be a graph on $n$ vertices. 
A \emph{walk} in \(G\) is a sequence \(v_1 \cdots v_t\) of vertices of \(G\) 
for which \(v_iv_{i+1}\in E(G)\)
for every \(i \in \{1,\ldots, t-1\}\);
and a \emph{path} in $G$ is a walk \(v_1\cdots v_t\) in $G$ for which $v_i \neq v_j$ for all distinct $i,j \in \{1,\ldots, t\}$.
Given a positive integer \(n\),
we denote by  $P_n$ (resp. \(C_n\))  the path (resp. cycle) with \(n\) vertices.

Let \(S = a_1\cdots a_{2t}\) be a sequence of integers.
We say that \(S\) is \emph{repetitive} if $a_i=a_{i+t}$ for all $i\in \{1,\ldots, t\}$;
otherwise we say that \(S\) is \emph{nonrepetitive}.
A \emph{subsequence} \(S'\) of \(S\) is any sequence of consecutive elements of \(S\).
In 1906, Thue proved that there are arbitrarily large sequences on three symbols
without repetitive subsequences~\cite{thue1906unendliche}.

Given a graph \(G\) and a positive integer \(k\), a \emph{$k$-coloring} of the vertices of $G$ is a function \(c\colon V(G)\to \{1,\ldots,k\}\).
Given a \(k\)-coloring  \(c\) of the vertices of a graph \(G\)
we say that a walk \(w = v_1\cdots v_{2t}\)
is \emph{repetitive} (with respect to \(c\)) if \(c(v_1)\cdots c(v_{2t})\) is a repetitive sequence;
and we say that \(c\) is a \emph{path-nonrepetitive coloring} if no path
of \(G\) is repetitive.
The \emph{path-nonrepetitive chromatic number} of \(G\), denoted by $\pi(G)$,
is then defined to be the minimum integer $k$ for which $G$ admits a path-nonrepetitive $k$-coloring. 
Naturally, every path-nonrepetitive coloring is a \emph{proper} coloring,
i.e., \(c(u) \neq c(v)\) for every \(uv\in E(G)\),
and hence \(\chi(G)\leq \pi(G)\),
where \(\chi(G)\) is the well-known \emph{chromatic number} of \(G\).

The concept of path-nonrepetitive colorings was introduced in 2002 by Alon, Grytczuk, Ha{\l}uszczak, and Riordan~\cite{alon2002nonrepetitive},
who proved that for every graph \(G\) we have $\pi(G)\leq C\cdot \Delta(G)^2$, for \(C = 2 e^{16}+1\),
where \(\Delta(G)\) denotes the maximum degree of \(G\).
Some results on the path-nonrepetitive chromatic number 
of specific classes of graphs include Thue's Theorem mentioned above~\cite{thue1906unendliche}
that says, in other words, that $\pi(P_n)=3$, for every $n$;
and Currie's result~\cite{currie2002there} that says that $\pi(C_n)=4$, if $n\in \{5,7,9,10,14,17\}$ and $\pi(C_n)=3$ otherwise.
For more results on path-nonrepetitive colorings, we refer the reader to~\cite{Wood2020}.

We consider two variations of the concept above that were introduced in~\cite{BaratWood08},
and answer some questions posed in~\cite{Wood2020}.
More specifically, we study walk-nonrepetitive colorings of cycles
and stroll-nonrepetitive colorings of paths and cycles,
which we define below.

\bigskip\noindent\textbf{Walk-nonrepetitive colorings.}
We say that a walk \(w = v_1 \cdots v_{2t}\) is \emph{boring} if $v_i = v_{i+t}$ for each $i \in \{1,\ldots,t\}$;
otherwise we say that \(w\) is a \emph{nonboring} walk.
Note that every boring walk is repetitive in every coloring \(c\) of \(V(G)\).
Thus, we say that a coloring \(c\) of a graph \(G\) is \emph{walk-nonrepetitive}
if every repetitive walk is boring;
and define the \emph{walk-nonrepetitive chromatic number},
which is denoted by \(\sigma(G)\), to be the minimum \(k\) for which there is a walk-nonrepetitive \(k\)-coloring of \(G\).
Since every path is a nonboring walk, 
every walk-nonrepetitive coloring is a path-nonrepetitive coloring,
and hence \(\pi(G) \leq \sigma(G)\).

In this paper we study walk-nonrepetitive colorings of cycles.
Naturally, \(\sigma(C_3) = 3\).
In 2008, Barát and Wood~\cite{BaratWood08} proved that \(4\leq \sigma(C_n) \leq 5\) for every \(n\geq 4\), and in 2021, Wood~\cite[Open Problem 3.9]{Wood2020} asked whether \(\sigma(C_n) = 4\) for infinitely many \(n\).
In this paper we answer this question showing that for \(n\geq 4\),
we have \(\sigma(C_n) = 4\) unless \(n=5\) or \(n=7\).

\begin{restatable}{theorem}{wnrccycles}
\label{thm:wnrc-cycles}
Let \(n\geq 4\) be a positive integer.
If \(n\notin\{5,7\}\), then $\sigma(C_n) = 4$.
\end{restatable}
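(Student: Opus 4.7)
The matching lower bound $\sigma(C_n) \geq 4$ is already due to Barát and Wood, so it suffices to exhibit, for each $n \geq 4$ with $n \notin \{5,7\}$, an explicit walk-nonrepetitive $4$-coloring of $C_n$. My plan is to split the proof into a small finite collection of values of $n$, handled by exhibiting explicit cyclic words on $\{1,2,3,4\}$ and verifying them by direct case analysis, together with a parametric family that covers all sufficiently large $n$.

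For the generic family, I would start from a \emph{master word} $W$ on $\{1,2,3,4\}$, for instance a Thue-like ternary sequence augmented by sporadic occurrences of the fourth color. The coloring of $C_n$ is then obtained by truncating $W$ to length $n$ and, if necessary, applying a local patch near the closing position. The patch depends on the residue of $n$ modulo a small integer (likely one of $2$, $3$, or $4$) and is engineered so that the cyclic closure does not create a new long-range repetition. This produces a short list of explicit colorings that, taken together, cover every residue class.

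The heart of the proof is checking that no nonboring repetitive walk exists. Given a candidate walk $w = v_1 \cdots v_{2t}$, I would encode it by its sign sequence $s_1,\ldots,s_{2t-1}\in\{+1,-1\}$, where $s_j$ records the direction of the step $v_j v_{j+1}$ around the cycle, and use the identity $v_{i+t} - v_i \equiv \sum_{j=i}^{i+t-1} s_j \pmod{n}$. Repetitiveness forces $c(v_{i+t}) = c(v_i)$ for every $i \in \{1,\ldots,t\}$, and I would analyze two regimes separately: walks whose image is contained in a short arc of $C_n$, where the coloring inherits walk-nonrepetitivity from a long walk-nonrepetitive coloring of a path derived from $W$; and walks that wrap around the cycle, where the period of $W$ together with divisibility constraints on $n$ should rule out every repetition.

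I expect the wrap-around regime to be the main obstacle. Since walks in a cycle can revisit vertices in intricate patterns and oscillate freely, the master word $W$ and each residue-specific patch must be engineered with enough asymmetry to block every wrap-around coincidence of colors. The delicate combinatorial case analysis needed to certify this, handled uniformly across all residue classes, will be the crux of the proof; the small exceptional cases $n \in \{5, 7\}$ will naturally appear as obstructions where no such engineering is possible, explaining why they must be excluded.
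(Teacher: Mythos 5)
Your skeleton (quote the Barát--Wood lower bound, handle small $n$ by explicit colorings, cover large $n$ by a parametric construction) matches the paper's, but the proposal leaves the two hard parts as declared intentions rather than arguments, and it is missing the one idea that makes the theorem tractable. The paper first proves a characterization (Lemma~\ref{theorem:characterizing-wnrc-of-cycles}): a coloring of a cycle is walk-nonrepetitive if and only if it is simultaneously path-nonrepetitive and distance-$2$. The distance-$2$ condition is exactly what tames the ``intricate patterns and oscillations'' you correctly identify as the main obstacle: in a distance-$2$ coloring a repetitive walk can never change direction, so the whole verification collapses to checking paths plus a short wrap-around analysis. Your plan instead proposes to verify nonrepetitivity of \emph{all} walks directly via sign sequences, and you explicitly defer that case analysis as ``the crux of the proof.'' Without the characterization lemma (or something equivalent), that crux is precisely the part that needs to be done, and nothing in the proposal does it.

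The construction itself is also not pinned down. ``A Thue-like ternary sequence augmented by sporadic occurrences of the fourth color,'' truncated and locally patched per residue class, is a description of a hoped-for object, not an object: you give no candidate word, no statement of what properties the patch must preserve, and no argument that such a word exists for every $n$. The paper avoids this by a different route: it takes Currie's path-nonrepetitive $3$-coloring of an even cycle $C_{2k}$ with $k=\lceil n/3\rceil$, proves a small lemma guaranteeing at least two ``good'' edges in a perfect matching (edges not incident to a vertex whose two neighbors share a color), subdivides $k-m$ matching edges so as to cover every such symmetrical vertex while hitting length exactly $n$, and colors the new vertices $4$. This automatically produces a distance-$2$ path-nonrepetitive coloring, and the characterization lemma finishes the proof. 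To repair your proposal you would need either to prove the reduction to distance-$2$ path-nonrepetitive colorings yourself, or to actually exhibit the master word and carry out the full walk analysis---neither of which is present. Finally, the remark that $n\in\{5,7\}$ ``will naturally appear as obstructions'' is not a substitute for the separate verification that $\sigma(C_5)=\sigma(C_7)=5$, which the paper does by exhaustive search.
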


For that, we extend to cycles a characterization of walk-nonrepetitive colorings of trees given by Wood~\cite{BaratWood08}:
A coloring \(c\) of a cycle \(C_n\)
is walk-nonrepetitive if and only if \(c\) is a path-nonrepetitive distance-\(2\) coloring (see Lemma~\ref{theorem:characterizing-wnrc-of-cycles}).
A distance-2 coloring is a coloring in which vertices with distance at most 2 are colored with distinct colors (see Section~\ref{sec:wnrc-cycles}).

\bigskip\noindent\textbf{Stroll-nonrepetitive colorings.}
We say that a walk $w = v_1 \cdots v_{2t}$ is a \emph{stroll} if $v_i \neq v_{i+t}$ for each $i\in \{1,\ldots, t\}$,
i.e.,
if no vertex of the first half of \(w\) is repeated
in precisely the same position in the second half of \(w\).
In other words, a walk \(v_1\cdots v_{2t}\) 
fails to be a stroll if \(v_i = v_{i+t}\) for some \(i\in\{1,\ldots,t\}\).
Naturally, any path in \(G\) is a stroll,
and no stroll is a boring walk.
We then say that a coloring $c$ of \(G\) is \emph{stroll-nonrepetitive} if \(c\) has no repetitive stroll, and define the \emph{stroll-nonrepetitive chromatic number},
which is denoted by $\rho (G)$, as the minimum $k$ for which there is a stroll-nonrepetitive $k$-coloring of~\(G\). 
Note that 
every walk-nonrepetitive coloring is a stroll-nonrepetitive coloring, and
every stroll-nonrepetitive coloring is a path-nonrepetitive coloring.
Therefore, we have 

\begin{equation}\label{eq:relation-btw-chromatic_numbers}
    \pi(G)\leq \rho(G)\leq \sigma(G)
\end{equation}

In 2008, K{\"u}ndgen and Pelsmajer~\cite{kundgen2008nonrepetitive} proved that \(\sigma(P_n) \leq 4\) for every \(n\),
from which we conclude that \(\rho(P_n)\leq 4\) for every \(n\);
and in~2021 Ochem proved that \(\rho(P_n) =  4\) for \(n\geq 24\) (see~\cite[Proposition~3.3]{Wood2020}).
A version of this result for large \(n\) was given by Tao~\cite{tao2023non}.
Observe that any proper \(2\)-coloring of \(P_3\)
is stroll-nonrepetitive because
every repetitive walk must repeat the middle vertex in the same position 
in its two halves, and hence is not a stroll.
In this paper, we  modify Ochem's approach in order to settle \(\rho(P_n)\) as follows.

\begin{restatable}{theorem}{snrcpaths}
\label{thm:snrc-paths}
Let \(n\geq 4\) be a positive integer.
Then
\[
\rho(P_n)=\begin{cases}
        3, & \text{if }  n\leq 21\\
        4, & \text{if } n\geq 22
\end{cases}
\]
\end{restatable}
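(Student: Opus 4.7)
The plan is to establish the upper and lower bounds on $\rho(P_n)$ in each of the two regimes. For the upper bound $\rho(P_n) \leq 4$ when $n \geq 22$, I would invoke K\"undgen and Pelsmajer's result $\sigma(P_n) \leq 4$ together with~\eqref{eq:relation-btw-chromatic_numbers}. For the upper bound $\rho(P_n) \leq 3$ when $4 \leq n \leq 21$, since any prefix of a stroll-nonrepetitive coloring is itself stroll-nonrepetitive, it suffices to exhibit an explicit stroll-nonrepetitive $3$-coloring $c$ of $P_{21}$. The verification is a priori over infinitely many walks since strolls can be arbitrarily long, but I expect one can bound the length of a shortest repetitive stroll in terms of $n$ via a pigeonhole argument on walk-states of the form \emph{(current vertex, index modulo period)}, reducing the check to inspecting finitely many walks, which can then be enumerated by hand or by a short computer verification.

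For the lower bound $\rho(P_n) \geq 3$ when $n \geq 4$, I would note that every stroll-nonrepetitive coloring is proper (because the edge $v_1 v_2$ itself is a length-two stroll), so with only two colors the coloring of $P_n$ must be alternating; then the first four vertices already form a path with color sequence $1212$, which is a repetitive stroll (since every path is a stroll).

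The main obstacle is the lower bound $\rho(P_n) \geq 4$ for $n \geq 22$, which by prefixing reduces to showing that no $3$-coloring of $P_{22}$ is stroll-nonrepetitive. Here I would adapt Ochem's strategy used for $n \geq 24$: catalog the short repetitive strolls realisable on path segments (notably, oscillating back-and-forth walks $v_i v_{i+1} v_i v_{i+1} \cdots$ and slightly more elaborate strolls that also revisit vertices) and use them as \emph{forbidden factors} that severely restrict how a stroll-nonrepetitive $3$-coloring of $P_n$ can be extended one vertex at a time. Growing the tree of valid partial colorings and pruning whenever a forbidden factor appears, the target is to show every branch dies before length $22$. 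The difficulty is to tighten Ochem's argument to drop the threshold from $24$ to $22$: this should require enlarging the forbidden-factor list with additional repetitive strolls exploiting short oscillations (which Ochem's treatment, aimed at the asymptotic threshold, did not need), together with a careful, possibly computer-assisted, case analysis of the finitely many surviving prefixes.
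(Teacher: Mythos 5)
Your overall architecture coincides with the paper's: Kündgen--Pelsmajer's $\sigma(P_n)\leq 4$ for the upper bound when $n\geq 22$, an explicit stroll-nonrepetitive $3$-coloring of $P_{21}$ for the small cases, properness for $\rho\geq 3$, and a forbidden-factor analysis of $3$-colorings for the lower bound when $n\geq 22$. But the two steps carrying all of the content are left as intentions, and as stated each has a genuine gap. For the small cases you exhibit no coloring; the paper uses $121312321323123213121$ and verifies it by a direct structural argument (first showing the central vertex $v_{11}$ cannot lie on a repetitive stroll, then $v_7$, which confines any repetitive stroll to three vertices). Your pigeonhole reduction to a finite check is salvageable but imprecise as written: the correct state is the \emph{pair} $(w_i,w_{i+t})$ of walk-positions at offset $t$ (a two-pointer/product argument, giving period at most $n^2$ for a shortest repetitive stroll), not ``index modulo period,'' since $t$ itself is unbounded.

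The more serious gap is the lower bound for $n\geq 22$, which is exactly where the theorem's novelty lies. The paper does not grow a tree of partial colorings; it encodes a proper $3$-coloring of a path by its $SA$-sequence (for each internal vertex, whether its two neighbours receive the same colour), notes that the coloring is determined by the first two colours plus this sequence, and proves that stroll-nonrepetitiveness forbids the five subwords $SS$, $AAAA$, $ASASA$, $AASAASAA$, $AAASAAASAAA$, each witnessed by an explicit short repetitive stroll that backtracks at one or both ends (e.g.\ $v_2v_1v_2v_3\cdots v_{10}v_9$ for $AASAASAA$). A short combinatorial argument then shows the longest word avoiding all five is $SASAASAAASAAASAASAS$, of length $19$, so $n-2\leq 19$, i.e.\ $n\leq 21$. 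Your proposal correctly anticipates that extra forbidden factors beyond Ochem's are needed to lower the threshold from $24$ to $22$, but without producing them (in particular the two longer patterns, which require strolls stepping backwards at \emph{both} ends) there is no argument that every branch of your search dies before length $22$; supplying those repetitive strolls and the extremal-word bound is precisely the missing proof.
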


In 2021, Wood~\cite[Open Problem 3.9]{Wood2020} asked whether \(\rho(C_n)\leq 3\) for infinitely many \(n\) or \(\rho(C_n)\leq 4\) for infinitely many \(n\).
Now, suppose \(n \geq 22\).
Since \(C_n\) contains \(P_n\), we have \(\rho(C_n) \geq \rho(P_n)\),
and hence, by Theorem~\ref{thm:snrc-paths}, we have \(\rho(C_n) \geq 4\).
Then, by Theorem~\ref{thm:wnrc-cycles} and Equation~\ref{eq:relation-btw-chromatic_numbers}, 
we have \(\rho(C_n) = 4\),
which answers Wood's question.
In fact, we can obtain the following slightly stronger result.

\begin{restatable}{theorem}{snrcycles}
\label{thm:snrc-cycles}
Let \(n\geq 3\) be a positive integer. Then
\[
\rho(C_n)=\begin{cases}
        3, & \text{if }  n \in\{3,4,6,8\}\\
        4, & \text{if } n \notin \{3,4,6,8\}
\end{cases}
\]
\end{restatable}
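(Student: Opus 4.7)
The plan is to prove matching upper and lower bounds for $\rho(C_n)$ by combining Theorems~\ref{thm:wnrc-cycles} and~\ref{thm:snrc-paths}, Currie's result on $\pi(C_n)$, and Equation~\ref{eq:relation-btw-chromatic_numbers}, reducing the problem to a narrow range of middle-sized cycles that will require direct enumeration.

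For the upper bound, I would first note that Equation~\ref{eq:relation-btw-chromatic_numbers} together with Theorem~\ref{thm:wnrc-cycles} yields $\rho(C_n)\leq \sigma(C_n)=4$ for every $n\geq 4$ with $n\notin\{5,7\}$. The two exceptional values $n\in\{5,7\}$ would be handled by exhibiting an explicit stroll-nonrepetitive $4$-coloring of each and verifying it by hand; the verification is finite because once a walk is sufficiently long a pigeonhole argument on its direction-change pattern on the small cycle forces two matched positions to coincide and thus destroys the stroll condition. For $n\in\{3,4,6,8\}$ I would exhibit an explicit stroll-nonrepetitive $3$-coloring (e.g.\ something like $123$, $1213$, $121323$, $12132132$ adapted as necessary) and verify stroll-nonrepetitivity by the same bounded check.

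For the lower bound, I would proceed in three stages. The easy stage shows $\rho(C_n)\geq 3$ for $n\in\{3,4,6,8\}$: the case $n=3$ follows from $\chi(C_3)=3$, and for even $n\in\{4,6,8\}$ the alternating $2$-coloring makes $v_1v_2v_3v_4$ a repetitive stroll since opposite vertices of that walk are distinct. The next stage shows $\rho(C_n)\geq 4$ for the large cycles $n\geq 22$: since $P_n\subseteq C_n$, Theorem~\ref{thm:snrc-paths} gives $\rho(C_n)\geq \rho(P_n)=4$. The third stage covers $n\in\{5,7,9,10,14,17\}$, where Currie's theorem gives $\pi(C_n)=4$ and Equation~\ref{eq:relation-btw-chromatic_numbers} upgrades this to $\rho(C_n)\geq \pi(C_n)=4$.

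The main obstacle is the remaining range $n\in\{11,12,13,15,16,18,19,20,21\}$, where $\pi(C_n)=3$ so Currie's bound is insufficient while Theorem~\ref{thm:snrc-paths} does not yet apply. For each such $n$ my plan is to show that every path-nonrepetitive $3$-coloring $c$ of $C_n$ admits a repetitive stroll that is not a path. Ternary sequences without repetitions are highly constrained, so I would enumerate, up to cyclic rotation and color permutation, the path-nonrepetitive $3$-colorings of $C_n$ and, for each, exhibit a concrete non-path stroll of the form $v_1\cdots v_tv_1'\cdots v_t'$ with $v_i\neq v_i'$ and $c(v_i)=c(v_i')$ for all $i\in\{1,\ldots,t\}$. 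The natural candidates are short strolls that traverse an arc of the cycle and then bounce back along a detour, since these are automatically non-paths while remaining strolls; a second natural family are strolls that wind once around the cycle out of phase with a second traversal. I expect this enumerative case analysis to be the most delicate step of the proof, its feasibility hinging on the scarcity of path-nonrepetitive $3$-colorings of moderately sized cycles.
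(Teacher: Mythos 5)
Your outer skeleton (upper bounds from Theorem~\ref{thm:wnrc-cycles} and Equation~\ref{eq:relation-btw-chromatic_numbers}, lower bounds from Currie's theorem for $n\in\{5,7,9,10,14,17\}$ and from $\rho(C_n)\geq\rho(P_n)=4$ for $n\geq 22$, explicit colorings for the small exceptional cycles) is sound and overlaps with what the paper does for the small cases. But the heart of the theorem is exactly the range you defer to ``enumerative case analysis,'' namely $n\in\{11,12,13,15,16,18,19,20,21\}$, and there your proposal contains no argument --- only the intention to enumerate all path-nonrepetitive $3$-colorings of each such $C_n$ up to symmetry and find a repetitive non-path stroll in each. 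That is a genuine gap: the number of ternary square-free cyclic words of length up to $21$ is large enough that this is not a feasible hand computation, and you give no structural reason why every such coloring must admit a repetitive stroll, so the enumeration is not shown to close. The idea you are missing is that Lemma~\ref{3-colStrollNonRepetitiveBlocks} applies to every \emph{subpath} of $C_n$: a stroll-nonrepetitive $3$-coloring of $C_n$ would force the cyclic $SA$-sequence to be $\mathcal{H}$-free on all its linear subwords, and a short analysis of cyclic words avoiding $SS$, $AAAA$, $ASASA$, $AASAASAA$ and $AAASAAASAAA$ shows this is impossible for every $n\geq 9$ (with only $n=9$ and $n=12$ needing a two-line direct check). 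This handles your entire problematic range, and indeed all of $n\geq 9$, uniformly and without Currie's theorem or the $n\geq 22$ reduction.

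Two smaller issues. First, your candidate coloring $12132132$ of $C_8$ is not even path-nonrepetitive (the path $v_8v_1v_2v_3$ receives $2121$), so ``adapted as necessary'' is doing real work there; the paper uses $12132123$, and its verification is a genuine, if short, argument rather than a routine bounded check, because color $3$ appears twice and one must rule out strolls visiting both occurrences in matched positions. Second, for $C_5$ and $C_7$ the clean way to verify a $4$-coloring is the observation the paper uses throughout: if some color appears on exactly one vertex $u$, then no repetitive stroll can contain $u$, which immediately confines all strolls to a $3$- or $6$-vertex subpath; your ``pigeonhole on direction-change patterns'' is vaguer and unnecessary.
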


\section{The walk-nonrepetitive chromatic number of cycles}\label{sec:wnrc-cycles}

Given a graph \(G\), we say that a coloring \(c\colon V(G) \to \{1,\ldots,k\}\)
is a \emph{distance-\(2\) coloring} if \(c(u) \neq c(v)\) for every \(u,v\in V(G)\)
for which \(dist(u,v) \leq 2\).
In other words, \(c\) is a distance-\(2\) coloring if
\(c(u) \neq c(v)\) whenever \(uv\in E(G)\) or \(u\) and \(v\) have a common neighbor.
In this section we prove Theorem~\ref{thm:wnrc-cycles}, which settles 
the walk-nonrepetitive chromatic number of cycles.
In~2008, Barát and Wood proved that if \(G\) is a tree, 
then a coloring \(c\) of \(G\) is walk-nonrepetitive if and only if \(c\) is a distance-\(2\) path-nonrepetitive   coloring~\cite[Theorem~3.1]{BaratWood08}.
The next result shows that such a characterization holds also when \(G\) is a cycle.

\begin{lemma}\label{theorem:characterizing-wnrc-of-cycles}
Let \(c\) be a coloring of a cycle \(C\).
Then \(c\) is walk-nonrepetitive if and only if \(c\) is a distance-\(2\) path-nonrepetitive coloring.
\end{lemma}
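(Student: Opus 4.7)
The forward direction is a quick two-part observation. Every path is a nonboring walk (its vertices are pairwise distinct), so walk-nonrepetitiveness immediately implies path-nonrepetitiveness. For distance-$2$: if $c(u)=c(v)$ with $uv\in E(C)$, the walk $uv$ is nonboring and repetitive with $t=1$; and if $c(u)=c(v)$ with $u\neq v$ sharing a common neighbor $w$, the walk $u\,w\,v\,w$ is nonboring and repetitive with $t=2$. Hence walk-nonrepetitive implies distance-$2$.

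For the reverse direction, I would assume $c$ is distance-$2$ and path-nonrepetitive on $C=C_n$ and suppose, for contradiction, that $w=v_1\cdots v_{2t}$ is a repetitive walk of minimum length that is not boring. The key consequence of distance-$2$ is that each vertex of $C$ has its two neighbors colored differently, so any walk in $C$ is determined by its starting vertex together with its color sequence. Applied to the two halves of $w$, which share a color sequence by repetitiveness, this yields: if $v_1=v_{t+1}$ then the halves coincide and $w$ is boring, so $v_1\neq v_{t+1}$. Writing $d_i$ for the cyclic displacement from $v_i$ to $v_{i+t}$ on $C_n$, distance-$2$ forces $d_i\in\{0\}\cup\{3,\ldots,n-3\}\pmod{n}$, while the walk step structure gives $d_{i+1}-d_i\in\{-2,0,2\}$; these combine to show that $d_i$ is either identically $0$ (boring, contradiction) or always nonzero, so $v_i\neq v_{i+t}$ for every $i$. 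A quick base-case check rules out $t\leq 2$, so $t\geq 3$.

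I would next show $w$ is monotone along $C$. A direction change at $v_j$ means $v_{j-1}=v_{j+1}$. If $j\notin\{t,t+1\}$, then by repetitiveness $c(v_{j-1+t})=c(v_{j-1})=c(v_{j+1})=c(v_{j+1+t})$, so $v_{j-1+t}$ and $v_{j+1+t}$ are neighbors of $v_{j+t}$ with equal color; by distance-$2$ they coincide, producing a simultaneous direction change at $v_{j+t}$. Removing the four vertices $\{v_j,v_{j+1},v_{j+t},v_{j+t+1}\}$ from $w$ then yields (after a direct index-shift check) a strictly shorter repetitive walk $w'$ that inherits $v_1\neq v_{t+1}$ and hence is still nonboring, contradicting the minimality of $w$. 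For direction changes at the middle positions $v_t$ or $v_{t+1}$, lifting $w$ to the universal cover $\mathbb Z$ of $C$ turns the repetitive condition into color identifications on a short arc of $C_n$; a case analysis on which of $v_t,v_{t+1}$ witness direction changes produces either a palindromic arc of length $\leq t+1$ (which forces two colors at cyclic distance $1$ or $2$, contradicting distance-$2$) or a constant shift $c_a=c_{a+(t-2)\alpha}$ on an arc, which is handled together with the wrapping monotone case below.

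This leaves a monotone $w$. If $2t\leq n$ then $w$ is literally a repetitive path in $C$, contradicting path-nonrepetitiveness. Otherwise $w$ wraps around $C$; setting $s=t\bmod n$, repetitiveness forces $c_a=c_{a+s}$ on all residues modulo $n$, and distance-$2$ gives $s\in\{3,\ldots,n-3\}$, so $c$ has a nontrivial rotational period $p=\gcd(s,n)\leq n/2$. Then the path $u_0u_1\cdots u_{2p-1}$ of $C_n$ has color sequence of period $p$ on $2p\leq n$ vertices, hence is a repetitive path, again contradicting path-nonrepetitiveness. I expect the main obstacle to be this wrapping/shift case, because one must combine the modular arithmetic produced by the wrap-around with a global consequence of path-nonrepetitiveness on $C$: namely that the coloring admits no nontrivial rotational period, since any such period would itself yield a short repetitive path.
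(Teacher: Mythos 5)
Your forward direction is correct and identical to the paper's, and your overall plan for the converse (take a minimal nonboring repetitive walk, use distance-$2$ to eliminate direction changes, then deal with the walk wrapping around the cycle) is the same as the paper's; the displacement invariant $d_i$ with steps in $\{-2,0,2\}$ is a clean way to get $v_i\neq v_{i+t}$ for all $i$, which the paper does not state explicitly. However, two steps of the converse are not actually proved.

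First, the direction changes at the middle positions $j\in\{t,t+1\}$ are only gestured at (``lifting to the universal cover \dots\ a case analysis \dots\ produces either a palindromic arc \dots\ or a constant shift''). This is precisely the case your four-vertex deletion trick does not cover, and it needs a concrete argument. The paper handles it by deleting $v_1,v_t,v_{t+1},v_{2t}$ to obtain the repetitive walk $v_2\cdots v_{t-1}v_{t+2}\cdots v_{2t-1}$, concluding by minimality that it is boring, and then using the distance-$2$ property (each vertex has a unique neighbor of a given color) to force $v_1=v_{t+1}$ and $v_t=v_{2t}$, making $w$ itself boring.

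Second, and more seriously, in the wrapping monotone case your key claim that repetitiveness forces $c_a=c_{a+s}$ for \emph{all} residues $a$ modulo $n$ is only immediate when $t\geq n$, i.e.\ when the first half of the walk already covers every vertex of $C$. In the regime $n<2t$ but $t<n$, the constraints $c_a=c_{a+t}$ hold only for the $t$ residues occupied by the first half; they link the residues into partial chains $b,\,b+t,\,b+2t,\dots$ rather than full cosets of $\langle\gcd(s,n)\rangle$, and no global rotational period follows. For instance, with $n=12$ and $t=7$ the available constraints only identify the color classes $\{u_0,u_5,u_7\}$, $\{u_1,u_6,u_8\}$, $\{u_2,u_9\}$, $\{u_3,u_{10}\}$, $\{u_4,u_{11}\}$; the contradiction there is that $u_5$ and $u_7$ are at distance $2$ and receive equal colors --- a distance-$2$ violation inside one chain --- not a short repetitive path coming from a period $p=\gcd(s,n)$. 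So your final step stalls exactly in this regime. The paper sidesteps the issue entirely: it sets $s=2t-k|C|$ with $k\geq1$ maximal, observes that $v_i=v_{i+2t-s}$ for $i\leq s$, and extracts the $2s$ central vertices $v_{t-s+1}\cdots v_{t+s}$ as a strictly shorter nonboring repetitive walk, contradicting minimality without ever needing a global period.
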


\begin{proof}
First, suppose that \(c\) is walk-nonrepetitive.
Since every path is a nonboring walk, \(c\) is path-nonrepetitive. 
Now, if a vertex $v$ has two neighbors $u$ and $w$ colored with the same color, 
then $uvwv$ is a repetitive nonboring walk, a contradiction. 
This implies that $c$ is a distance-2 coloring.
In what follows, we prove the converse, i.e., 
that if \(c\) is a path-nonrepetitive distance-\(2\) coloring, 
then \(c\) is a walk-nonrepetitive coloring.

Suppose, for a contradiction, that \(c\) is not a walk-nonrepetitive coloring.
We claim that if \(w = v_1 \cdots v_{2t}\) is a nonboring walk with \(t\in\{1,2\}\),
then \(c\) does not induce a repetitive coloring of \(w\).
Indeed, if \(t = 1\), then \(v_1 \neq v_2\), and since \(c\) is a proper coloring, 
we have \(c(v_1) \neq c(v_2)\), as desired.
Thus, assume \(t = 2\).
Since \(w\) is a nonboring walk, we have either \(v_1 \neq v_3\) or \(v_2\neq v_4\).
We may assume, by symmetry, that \(v_1\neq v_3\).
Since \(c\) is a distance-\(2\) coloring, we have \(c(v_1) \neq c(v_3)\),
and hence \(c\) does not induce a repetitive coloring of \(w\), as desired.
Now, let \(w = v_1 \cdots v_{2t}\) be a nonboring walk with minimum order
in which~\(c\) induces a repetitive coloring.
As shown above, we must have \(t > 2\).
Also, by the minimality of \(t\), \(c\) does not induce a repetitive coloring of \(w'\)
for every nonboring walk \(w' = v'_1 \cdots v'_{2t'}\) with \(t' < t\).

Since \(c\) is a path-nonrepetitive coloring, we may assume that \(w\) is not a path.
Suppose now that for some \(i\) we have \(c(v_{i-1}) = c(v_{i+1})\).
Since \(c\) induces a repetitive coloring of \(w\), we may assume that \(i \leq t+1\).
If  \(i < t\), then \(c(v_{j-1}) = c(v_{j+1})\) for \(j = i +t\),
and since \(c\) is a distance-\(2\) coloring, we have \(v_{i-1} = v_{i+1}\),
and \(v_{j-1} = v_{j+1}\)
but then \(c\) induces a repetitive coloring of 
\(w' = v_1\cdots v_{i-1}v_{i+2} \cdots v_{j-1}v_{j+2} \cdots v_{2t}\).
Clearly \(w'\) is a walk with \(2t - 4\) vertices.
By the minimality of \(w\), \(w'\) is a boring walk.
Then \(v_{i+1} = v_{i-1} = v_{j-1} = v_{j+1}\),
and since \(c(v_i) = c(v_j)\), and \(v_{i-1}\) has only one neighbor colored with \(c(v_i)\),
we have \(v_i = v_j\),
and hence \(w\) is a boring walk,
a contradiction.
Thus we have either \(i = t\) or \(i = t+1\).
In this case \(c\) induces a repetitive coloring of 
\(w' = v_2\cdots v_{t-1}v_{t+2} \cdots v_{2t-1}\).
Again, \(w'\) is a boring walk, and, since \(v_2 = v_{t+2}\) has only one neighbor colored with color \(c(v_1)\), and \(v_{t-1} = v_{2t-1}\) has only one neighbor colored
with color \(c(v_t)\), then \(w\) is a boring walk, a contradiction.

Therefore, we have \(c(v_{i-1})\neq c(v_{i+1})\) for every \(i \in \{2,\ldots, 2t-1\}\),
and hence \(v_{i-1}\neq v_{i+1}\) for every \(i \in \{2,\ldots, 2t-1\}\). 
In other words, the walk never ``changes direction''.
Therefore, since \(w\) is not a path, \(v_1\) appears twice in \(w\).
Let \(\ell = |C|\), and let \(k\geq 1\) be the maximum integer for which \(s = 2t - k\cdot\ell \geq 0\), i.e., \(k\) is the number of complete consecutive copies of \(C\) in \(w\),
and \(s < \ell\) is the number of remaining vertices that do not form a complete copy of \(C\). 
Therefore, for $i\in\{1,\ldots,s\}$, 
we have \(v_i = v_{i+\ell} = \cdots = v_{i+k\cdot \ell} = v_{i + 2t-s}\),
where in the last equality we used that \(k\cdot \ell= 2t-s\).
Thus $c(v_{i})=c(v_{t+i})$ and $c(v_{2t-s+i})=c(v_{t-s+i})$.
Now, consider the walks

\vspace{-20pt}
\begin{align*}
w_1 &= v_1 \cdots v_s &
w_2 &= v_{s+1} \cdots v_{t-s} &
w_3 &= v_{t-s+1} \cdots v_t \\
w_4 &= v_{t+1} \cdots v_{t+s} &
w_5 &= v_{t+s+1} \cdots v_{2t-s} &
w_6 &= v_{2t-s+1} \cdots v_{2t}
\end{align*}
which may have common vertices.
Since \(c\) induces a repetitive coloring of \(w\),
\(c\) induces the same coloring in \(w_{r}\) and \(w_{r+3}\) for \(r\in\{1,2,3\}\).
Now, since \(v_i  = v_{i + 2t-s}\) for every \(i\in\{1,\ldots,s\}\),
\(c\) induces the same coloring in \(w_1\) and \(w_6\),
and, consequently, \(c\) induces the same coloring in \(w_3\) and \(w_4\).
Finally, let \(w' = w_3w_4\) be the walk consisting of the \(2s\) central vertices of \(w\).
Observe that since $k\geq 1$, we have $2t=k\cdot\ell +s \geq \ell+s > 2s$.
But then \(c\) is a repetitive coloring of \(w'\),
which is a walk with \(2s < 2t\) vertices. 
Moreover, \(v_{t-s+1} \neq v_{t+1}\) because \(s < \ell\),
and hence \(w'\) is nonboring, a contradiction.
\end{proof}

We observe that the characterization above cannot be extended to more general unicyclic graphs (see Figure~\ref{fig:unicyclic}).

\begin{figure}[H]
	\begin{center}
		\begin{tikzpicture}
		[scale=1,auto=left]   
			\tikzstyle{every node}=[circle, draw, fill=white,
			inner sep=2.3pt, minimum width=2.3pt]

        \node[fill = green, label=below:\small $2$]  (b) at (1,0) {};
        \node[fill = red, label=left:\small $1$]  (a) at ($(b) + (150:1)$) {};
        \node[fill = blue, label=left:\small $3$]  (c) at ($(b) + (-150:1)$) {};
        \node[fill = yellow, label=below:\small $4$]  (d) at (2,0) {};
        \node[fill = red,label=below:\small $1$]  (e) at (3,0) {};
        \node[fill = green,label=below:\small $2$]  (f) at (4,0) {};
        \node[fill = blue,label=below:\small $3$]  (g) at (5,0) {};
        \node[fill = red,label=below:\small $1$]  (h) at (6,0) {};
        \node[fill = green,label=below:\small $2$]  (i) at (7,0) {};
        \node[fill = yellow,label=below:\small $4$]  (j) at (8,0) {};
        
		\foreach \from/\to in {a/b,a/c,b/c,b/d,d/e,e/f,f/g,g/h,h/i,i/j},  
		\draw (\from) -- (\to); 
		 
		\end{tikzpicture}   
	\end{center}
	\caption{A unicyclic graph with a distance-$2$ path-nonrepetitive coloring that is not a walk-nonrepetitive coloring.}
	\label{fig:unicyclic}
	\end{figure}
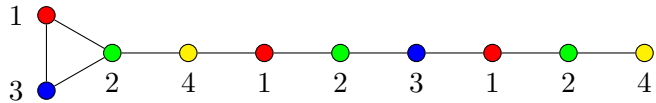	

In what follows, we prove the main result of this section.
For that, we need the following result proved by Currie~\cite{currie2002there} 
on the path-nonrepetitive chromatic number of cycles.

\begin{theorem}[Currie, 2002]\label{thm:currie-path-nonrep-cycle}

Let \(n\geq 3\) be a positive integer.
Then
\[
\pi(C_n)=\begin{cases}
        4, & \text{if }  n \in \{5,7,9,10,14,17\}\\
        3, & \text{otherwise}
\end{cases}
\]
\end{theorem}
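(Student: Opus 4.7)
The plan is to reformulate the statement as an assertion about \emph{circular ternary square-free words}. Since a path in $C_n$ is just an arc of consecutive vertices, a $3$-coloring of $C_n$ is path-nonrepetitive if and only if the induced cyclic sequence of colors has no factor of even length $2t \leq n$ that is a square $uu$. Thus the theorem reduces to showing that a circular ternary square-free word of length $n$ exists if and only if $n \notin \{5,7,9,10,14,17\}$. The upper bound $\pi(C_n) \leq 4$ can be obtained by placing a Thue square-free ternary word of length $n-1$ along $v_1, \ldots, v_{n-1}$ and assigning a fresh fourth color to $v_n$, then checking that no square straddles the extra vertex. The lower bound $\pi(C_n) \geq 3$ holds for $n=3$ because $\chi(C_3) = 3$, and for $n \geq 4$ because any proper $2$-coloring of $C_n$ is forced to be alternating and contains the square $1212$ on any four consecutive vertices.

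For the construction side (showing $\pi(C_n) = 3$ when $n \notin \{5,7,9,10,14,17\}$), I would invoke Thue's theorem to obtain arbitrarily long square-free ternary sequences, and then exhibit, for each non-exceptional $n$, an explicit circular $3$-coloring. The natural approach is to identify a finite set of ``seed'' circular square-free words (one per residue class modulo some suitable period) and inflate them using a square-free morphism on $\{1,2,3\}^*$ to cover all sufficiently large $n$ in each class. The short non-exceptional lengths (say $n \leq 30$) would be verified case by case via explicit witness colorings; careful bookkeeping is required at the wrap-around seam of each inflated word to certify that no square spans it.

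For the lower bound $\pi(C_n) \geq 4$ when $n \in \{5,7,9,10,14,17\}$, I would perform a finite case analysis. After normalizing $c(v_1) = 1$, one proceeds along the cycle, at each step using the proper-coloring constraint together with the absence of the short square $c(v_{i-1})c(v_i)c(v_{i+1})c(v_{i+2}) = abab$ to sharply limit the branching on $c(v_{i+2})$. For $n \in \{5, 7\}$ this is feasible by hand: the short length quickly forces a contradiction. For $n \in \{9, 10, 14, 17\}$ the argument is more delicate and reduces to a (possibly computer-aided) enumeration of the residual configurations up to the natural symmetries of $C_n$ (rotations, reflections, and the six permutations of the three colors).

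The hardest part is expected to be the uniform construction step: furnishing a tractable description of circular ternary square-free words for every non-exceptional length. The difficulty is that concatenation of valid short words does not in general yield a valid longer word, since a square can arise at the junction or wrap around the closing seam. A morphism-based construction circumvents this issue locally but introduces the separate combinatorial task of classifying precisely which short seeds, once inflated, remain circularly square-free, and of proving that together the chosen seeds cover every residue class outside the exceptional set.
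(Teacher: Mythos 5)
The paper does not prove this statement at all: it is quoted verbatim from Currie's 2002 paper (cited as~\cite{currie2002there} in the source) and used as a black box in the proof of Theorem~\ref{thm:wnrc-cycles}. So there is no internal proof to compare against; what matters is whether your proposal would stand on its own as a proof of Currie's theorem.

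It would not, as written. Your reduction to circular ternary square-free words is correct, and the $\pi(C_n)\leq 4$ argument (a fourth color used exactly once cannot occur in a square) and the $\pi(C_n)\geq 3$ argument are fine. But the two substantive halves of the theorem are left as plans rather than proofs. For the construction side, the claim that a family of seed circular square-free words, inflated by a square-free morphism, covers every non-exceptional length in every residue class is precisely the hard content of Currie's theorem; you correctly identify the obstruction (squares arising at the seam or wrapping around), but you neither name a specific morphism, nor exhibit the seeds, nor prove that inflation preserves circular square-freeness --- and the last point is genuinely delicate, since a morphism that is square-free on linear words need not send circularly square-free words to circularly square-free words. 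For the impossibility side, the cases $n\in\{9,10,14,17\}$ are deferred to an unspecified (possibly computer-aided) enumeration with no bound on the search or argument that it terminates in a contradiction. In short, the skeleton is the right one, but both load-bearing steps are missing; to use this result in the present paper the honest options are either to cite Currie (as the authors do) or to actually carry out the morphism construction and the finite case analyses.
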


As the base cases of our result, we show in Table~\ref{TableWalkNonRepetitiveCycleGraphs}
minimum walk-nonrepetitive colorings of cycles of order at most \(21\)
found with the aid of a computer.
For that, we used a naive Integer Linear Programming formulation for distance-\(2\) colorings
with additional restrictions induced by the paths with even order in \(C_n\)\footnote{A linear program for finding walk-nonrepetitive colorings of cycles with a given number of colors in Sage~\cite{sagemath} is available at \href{https://www.cos.ufrj.br/~fbotler/codes/wnrc-cycles.ipynb}{https://www.cos.ufrj.br/~fbotler/codes/wnrc-cycles.ipynb}}.
Lemma~\ref{theorem:characterizing-wnrc-of-cycles} guarantees that such restrictions
are enough to find walk-nonrepetitive colorings of such graphs.

\begin{proposition}\label{prop:base-case-walk-non-repetitive-cycles}
If \(4\leq n \leq 21\) and \(n \notin \{5,7\}\), then \(\sigma(C_n) = 4\). If \(n \in\{5,7\}\), then \(\sigma(C_n) = 5\).
\end{proposition}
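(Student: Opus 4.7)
The plan is to reduce the search for walk-nonrepetitive $k$-colorings of $C_n$ to a finite combinatorial check via Lemma~\ref{theorem:characterizing-wnrc-of-cycles}, which tells us that $\sigma(C_n)\le k$ if and only if $C_n$ admits a path-nonrepetitive distance-$2$ $k$-coloring. Since $n\le 21$ is bounded and $C_n$ has only finitely many paths, both sides of this equivalence are finitely checkable, so the proposition reduces to a case-by-case verification. Concretely, I would set up, for each $(n,k)$, an ILP whose variables encode the coloring and whose constraints encode the distance-$2$ condition together with, for each even-length path $v_1\cdots v_{2t}$ in $C_n$, the clause forbidding $c(v_i)=c(v_{t+i})$ for all $i\in\{1,\ldots,t\}$. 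By the lemma, this ILP is feasible exactly when $\sigma(C_n)\le k$.

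For the upper bounds, it suffices to exhibit explicit $4$-colorings (resp.\ $5$-colorings for $n\in\{5,7\}$) as given in Table~\ref{TableWalkNonRepetitiveCycleGraphs}, and to check that each is distance-$2$ and induces no repetitive even-length path in the corresponding cycle. These colorings are produced by the ILP linked in the footnote, and the fact that they witness $\sigma(C_n)\le k$ is precisely Lemma~\ref{theorem:characterizing-wnrc-of-cycles}.

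For the lower bound $\sigma(C_n)\ge 4$ with $n\notin\{3,5,7\}$, I would argue structurally. A distance-$2$ $3$-coloring of $C_n$ forces any three consecutive vertices to receive three distinct colors, so the coloring must be periodic of period~$3$; this is consistent on the cycle only when $3\mid n$, so for $3\nmid n$ no distance-$2$ $3$-coloring exists. When $3\mid n$ and $n\ge 6$, the periodic coloring contains the path $v_1\cdots v_6$ with color sequence $1,2,3,1,2,3$, which is repetitive. In either case $3$ colors cannot yield a path-nonrepetitive distance-$2$ coloring, so $\sigma(C_n)\ge 4$. For $n=5$, the observation that $C_5^{2}=K_5$ forces any distance-$2$ coloring to use at least $5$ colors, whence $\sigma(C_5)=5$ (the upper bound being the trivial all-distinct $5$-coloring).

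The main obstacle is the case $n=7$, where I must rule out \emph{every} distance-$2$ $4$-coloring of $C_7$ as path-nonrepetitive. Unlike the other cases, this does not follow from any obvious structural shortcut, since Currie's theorem only gives $\pi(C_7)\ge 4$ and the distance-$2$ constraint alone is satisfiable with $4$ colors on $C_7$. I would therefore rely on the ILP infeasibility certificate produced by the Sage notebook cited in the footnote, run with $n=7$ and $k=4$, to show $\sigma(C_7)\ge 5$; the matching upper bound is again read off Table~\ref{TableWalkNonRepetitiveCycleGraphs}. Together with the explicit colorings for all other cases and the elementary lower bounds above, this completes the proposition.
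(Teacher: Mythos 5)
Your proposal is correct and follows essentially the same route as the paper: both reduce the problem via Lemma~\ref{theorem:characterizing-wnrc-of-cycles} to finding (or certifying the nonexistence of) distance-$2$ path-nonrepetitive colorings, exhibit the colorings of Table~\ref{TableWalkNonRepetitiveCycleGraphs} for the upper bounds, and defer the hard lower-bound case ($n=7$ with $4$ colors) to the ILP infeasibility check. Your hand-proved lower bounds (the period-$3$ forcing argument for three colors, and $C_5^2=K_5$) are a nice self-contained substitute for the paper's appeal to Barát and Wood, but the substance of the argument is the same.
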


\begin{table}
     \centering
     \begin{tabular}{|c|c|l|}
     \hline
         $n$ & $\sigma(C_n)$ & Sequence of colors of the vertices \\
         \hline
          4& 4 &  $[1, 2, 3, 4]$ \\
          5& 5&   $[1, 2, 3, 4, 5]$\\
          6& 4&  $[1, 2, 3, 4, 2, 3]$\\
          7& 5&  $[1, 2, 3, 4, 2, 5, 3]$ \\
          8& 4&  $[1, 2, 3, 4, 1, 2, 4, 3]$ \\
          9& 4&  $[1, 2, 3, 4, 1, 3, 2, 4, 3]$ \\
          10& 4&  $[1, 2, 3, 4, 1, 2, 4, 3, 2, 4]$ \\
          11& 4& $[1, 2, 3, 1, 2, 4, 3, 2, 1, 3, 4]$  \\
          12& 4&  $[1, 2, 3, 4, 1, 2, 4, 3, 1, 4, 2, 3]$ \\
          13& 4&  $[1, 2, 3, 4, 1, 2, 4, 3, 1, 4, 3, 2, 4]$ \\
          14& 4&  $[1, 2, 3, 1, 2, 4, 1, 2, 3, 1, 4, 2, 3, 4]$ \\
          15& 4& $[1, 2, 3, 1, 4, 2, 1, 4, 3, 1, 4, 2, 1, 3, 4]$ \\
          16& 4& $[1, 2, 3, 1, 4, 2, 3, 4, 2, 1, 4, 2, 3, 1, 4, 3]$  \\
          17& 4& $[1, 2, 3, 1, 4, 3, 2, 4, 1, 3, 4, 2, 3, 1, 2, 3, 4]$  \\
          18& 4&  $[1, 2, 3, 1, 4, 2, 3, 4, 1, 3, 2, 4, 1, 2, 3, 1, 4, 3]$ \\
          19& 4&  $[1, 2, 3, 4, 2, 3, 1, 2, 4, 1, 2, 3, 4, 1, 3, 2, 1, 3, 4]$ \\
          20& 4& $[1, 2, 3, 1, 4, 2, 1, 4, 3, 2, 4, 1, 2, 3, 1, 2, 4, 1, 3, 4]$  \\
          21& 4&  $[1, 2, 3, 4, 1, 2, 4, 1, 3, 4, 2, 1, 3, 2, 4, 3, 2, 1, 4, 2, 3]$ \\
          \hline
     \end{tabular}
     \caption{Walk-nonrepetitive coloring of cycles of order at most 21.}
     \label{TableWalkNonRepetitiveCycleGraphs}
 \end{table}

Let \(G\) be a path or cycle and let \(c\colon V(G)\to\{1,2,3\}\)
be a path-nonrepetitive coloring of \(G\).
We say that a vertex \(v\in V(G)\) with \(d(v) = 2\) is a \emph{symmetrical} (with respect to \(c\))
if their neighbors are colored with the same color,
otherwise, we say that \(v\) is a \emph{asymmetrical}.
We also say that an edge \(e\in E(G)\) is a \emph{bad edge} (with respect to \(c\))
if \(e\) is incident to a symmetrical vertex,
and that \(e\) is a \emph{good edge} otherwise (see Figure~\ref{fig:goodandbadedge}).
The following lemma on path-nonrepetitive colorings of even cycles is used in the proof of Theorem~\ref{thm:wnrc-cycles}.

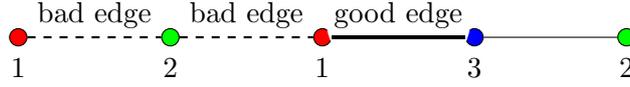
\begin{figure}[H]
	\begin{center}
		\begin{tikzpicture}
		[scale=1,auto=left]   
			\tikzstyle{every node}=[circle, draw, fill=white,
			inner sep=2.3pt, minimum width=2.3pt]
		\node[fill = red, label=below:\small $1$]  (a) at (0,0) {};
        \node[fill = green, label=below:\small $2$]  (b) at (2,0) {};
        \node[fill = red, label=below:\small $1$]  (c) at (4,0) {};
        \node[fill = blue, label=below:\small $3$]  (d) at (6,0) {};
        \node[fill = green, label=below:\small $2$]  (e) at (8,0) {};
        \node[draw = none] (f) at (1,0.3) {\small{bad edge}};
        \node[draw = none] (g) at (3,0.3) {\small{bad edge}};
        \node[draw = none] (h) at (5,0.3) {\small{good edge}};

		\foreach \from/\to in {c/d,d/e}  
		\draw (\from) -- (\to); 
        \draw[thick,dashed] (a) -- (b);
        \draw[thick, dashed] (b) -- (c);
        \draw[line width = 1.5] (c) -- (d);		 
		 
		\end{tikzpicture}   
	\end{center}
	\caption{A bad edge and a good edge depicted as, respectively, a dashed line and thick straight line: The leftmost green (color 2) vertex is a symmetrical vertex, while the middle red (color 1) vertex and the blue (color 3) vertex are asymmetrical vertices.} 
	\label{fig:goodandbadedge}
	\end{figure}

\begin{lemma}\label{lemma:good-edges}
    Let \(k\) be a positive integer.
    Let \(C = v_1v_2 \cdots v_{2k} v_1\) be a cycle of length \(2k\),
    and \(c\colon V(C)\to\{1,2,3\}\) be a path-nonrepetitive coloring of \(C\).
    If \(k\geq 8\),
    then there are at least two good edges in \(\{v_1v_2,v_3v_4,\ldots, v_{2k-1}v_{2k}\}\).
\end{lemma}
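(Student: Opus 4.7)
The plan is to argue by contradiction: suppose that the edge set $\mathcal{M}=\{v_1v_2,v_3v_4,\ldots,v_{2k-1}v_{2k}\}$ contains at most one good edge, so at least $k-1$ of its edges are bad. I first observe that the set $S$ of symmetrical vertices is independent in $C$, since if $v_i,v_{i+1}$ were both symmetrical then the path $v_{i-1}v_iv_{i+1}v_{i+2}$ would carry the repetitive coloring $abab$. The endpoints of each matching edge are adjacent in $C$, so every bad matching edge contains \emph{exactly} one vertex of $S$, giving $|S|\geq k-1$.

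Next I label each bad matching edge $v_{2i-1}v_{2i}$ as type $L$ when $v_{2i-1}\in S$ and as type $R$ when $v_{2i}\in S$. A consecutive pair of types $R$ then $L$ would place symmetrical vertices at the adjacent positions $v_{2i},v_{2i+1}$, contradicting independence of $S$; hence within any maximal arc of consecutive bad matching edges the type sequence reads as $L^aR^b$. Each block is then forced into a rigid $4$-periodic pattern: in an $L$-block of length $a$, the symmetry conditions $c(v_{2i-2})=c(v_{2i})$ force $c(v_{2k})=c(v_2)=\cdots=c(v_{2a})=\alpha$, while path-nonrepetitivity of $v_{2i-1}v_{2i}v_{2i+1}v_{2i+2}$ forces $c(v_{2i-1})\neq c(v_{2i+1})$. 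Together, since only three colors are available, these impose the period-$4$ pattern $\alpha,\sigma,\alpha,\overline{\sigma},\alpha,\sigma,\alpha,\overline{\sigma},\ldots$ on the $2a+1$ consecutive vertices $v_{2k},v_1,v_2,\ldots,v_{2a}$, and an analogous statement holds for an $R$-block on $2b+1$ consecutive vertices.

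Finally I split into two cases. If no matching edge is good, all of $\mathcal{M}$ is bad; since the cyclic $L^aR^b$ pattern forbids an $R$-then-$L$ transition, either $a=0$ or $b=0$, so the whole cycle carries a single $4$-periodic coloring and, because $2k\geq 16$, the path $v_1v_2\cdots v_8$ is repetitive, a contradiction. If exactly one matching edge is good, I may assume it is $M_k$; then the $k-1$ bad edges form a single arc of type $L^aR^b$ with $a+b=k-1$. A short verification shows that $a=0$ or $b=0$ forces an endpoint of $M_k$ to be symmetrical, so $a,b\geq 1$. If $a\geq 4$ then the path $v_1v_2\cdots v_8$ lies entirely inside the $L$-block and carries the repetitive coloring $(\sigma,\alpha,\overline{\sigma},\alpha)(\sigma,\alpha,\overline{\sigma},\alpha)$, impossible. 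Symmetrically $b\leq 3$, so $k-1=a+b\leq 6$, contradicting $k\geq 8$.

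The main difficulty is justifying the rigidity of the $4$-periodic $L$- and $R$-block patterns: this combines the symmetry conditions with path-nonrepetitivity along the full span of the block, and requires careful verification that an eight-vertex repetitive subpath lies entirely inside the block once its length exceeds the threshold, so that boundary interactions with the good edge or the other block do not disturb the argument.
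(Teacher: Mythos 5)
Your proof is correct, but it follows a genuinely different route from the paper's. The paper works locally: it proves two small claims (around a symmetrical vertex $w_3$ one has $c(w_1)=c(w_5)$ and $w_2,w_4$ asymmetrical; and a symmetrical vertex forces a good edge among the next two matching edges), and then runs a short case analysis starting from one bad edge to exhibit two good edges in distinct positions. You instead argue globally: symmetrical vertices form an independent set, so each bad matching edge has exactly one symmetrical endpoint, which yields your $L/R$ typing; forbidding the $RL$ transition makes each maximal run of bad edges an $L^aR^b$ block; and the rigidity of each block (even positions constant, odd positions alternating between the two remaining colors) produces a repetitive $8$-vertex path as soon as a block spans $8$ vertices, giving $a,b\leq 3$ and hence $k-1\leq 6$ when only one good edge exists (with the all-bad case handled by the cyclic version of the same transition argument). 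I checked the delicate points — the boundary claims that $a=0$ or $b=0$ would force an endpoint of the good edge to be symmetrical, and that the repetitive $8$-path lies entirely inside the relevant block — and they hold. Your approach is somewhat longer but more structural: it actually describes what colorings with few good edges must look like, whereas the paper's argument is a leaner existence proof; both ultimately rest on the same two facts (independence of symmetrical vertices and the impossibility of long $4$-periodic segments in a path-nonrepetitive coloring).
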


\begin{proof}
We shall use the following claims.

\begin{claim}\label{claim:around-bad-vertex}
Let \(w_1,w_2,w_3,w_4,w_5\) be a sequence of consecutive vertices in \(C\).
If \(w_3\) is a symmetrical vertex, then \(c(w_1) = c(w_5)\).
Furthermore, \(w_2\) and \(w_4\) are asymmetrical vertices.
\end{claim}

\begin{proof}
Suppose, without loss of generality that \(c(w_3) = 3\),
and \(c(w_2) = c(w_4) = 2\).
If \(c(w_1) = 2\), then \(w_1w_2\) is a repetitive path;
and if \(c(w_1) = 3\), then \(w_1w_2w_3w_4\) is a repetitive path.
Therefore, \(c(w_1) = 1\).
Analogously, \(c(w_5) = 1\), as desired.
\end{proof}

\begin{claim}\label{claim:no-three-consecutive-bad-edges}
Let \(w_1,w_2,w_3,w_4,w_5,w_6,w_7,w_8\) a sequence of consecutive vertices in \(C\)
in which \(w_2\) is a symmetrical vertex.
Then at least one between \(w_3w_4,w_5w_6\) is a good edge.
\end{claim}

\begin{proof}
Suppose, without loss of generality, \(c(w_1) = c(w_3) = 1\) and \(c(w_2) = 2\),
and suppose that \(w_3w_4\) and \(w_5w_6\) are bad edges.
By Claim~\ref{claim:around-bad-vertex},  \(w_3\) is an asymmetrical vertex,
and hence \(c(w_4) = 3\).
Since \(w_3w_4\) is a bad edge, \(w_4\) must be a symmetrical vertex,
which implies that \(c(w_5) = 1\).
Again, by Claim~\ref{claim:around-bad-vertex}, \(w_5\) is an asymmetrical vertex,
and hence \(c(w_6) = 2\);
and since \(w_5w_6\) is a bad edge, \(w_6\) must be a symmetrical vertex,
which implies \(c(w_7) = 1\).
Finally, by Claim~\ref{claim:around-bad-vertex}, \(c(w_8) = 3\),
but \(w_1 \cdots w_8\) is a repetitive path,
a contradiction.
\end{proof}

Let \(e_i = v_{2i-1}v_{2i}\),
and put \(S=\{e_i : i = 1,\ldots,k\}\).
By relabeling \(e_i\) either by shifting or reversing direction, we may assume, without loss of generality, that \(e_5\) is a bad edge
and \(v_{10}\) is a symmetrical vertex.
By Claim~\ref{claim:no-three-consecutive-bad-edges}, \(e_6\) or \(e_7\) is a good edge.
If \(e_3\) is a good edge, then we are done.
Thus, we may assume that \(e_3\) is a bad edge.
If \(v_5\) is a symmetrical vertex, then Claim~\ref{claim:no-three-consecutive-bad-edges}
tell us that \(e_1\) or \(e_2\) is a good edge.
Since \(k \geq 8\), we have \(\{e_1,e_2\}\cap\{e_6,e_7\} = \emptyset\),
and we are done.
Thus, we may assume \(v_6\) is a symmetrical vertex.
Now, by Claim~\ref{claim:no-three-consecutive-bad-edges},
since \(e_5\) is a bad edge, \(e_4\) must be a good edge.
This concludes the proof.
\end{proof}

Now, we may prove the main result of this section.

\wnrccycles*

\begin{proof}
Recall that, by Barát and Wood~\cite{BaratWood08}, \(4\leq \sigma(C_n) \leq 5\) for every \(n\geq 4\). 
Moreover, by Proposition~\ref{prop:base-case-walk-non-repetitive-cycles}, we may assume \(n>21\).
Let \(k = \lceil n/3\rceil\) and note that \(k\geq 8\).
Let \(C' = v_1 \cdots v_{2k}v_1\) be a cycle of length \(2k\).
By Theorem~\ref{thm:currie-path-nonrep-cycle},
\(C'\) admits a path-nonrepetitive \(3\)-coloring, say \(c'\colon V(C') \to \{1,2,3\}\).
Let \(e_i = v_{2i-1}v_{2i}\),
and put \(S=\{e_i : i = 1,\ldots,k\}\).
By Lemma~\ref{lemma:good-edges}, there are at least two good edges in \(S\).

Let \(m\in\{0,1,2\}\) be such that \(m=3k-n\) and put \(k' = k-m\).
Now, let \(S'\subseteq S\) be a set obtained from \(S\) 
by removing \(m\) of its good edges.
Let \(C\) be the cycle obtained from \(C'\) 
by subdividing each edge in \(S'\) once, i.e., by replacing each such edge by a path with two edges.
Let \(u_1,\ldots,u_{k'}\) be the vertices added due to the subdivisions,
and note that \(C\) has length \(2k + k' = 3k -m = n\).
Let \(c\colon V(C)\to\{1,2,3,4\}\) be a coloring of \(C\)
such that \(c(v) = c'(v)\) if \(v\in V(C')\),
and \(c(v) = 4\) otherwise,
i.e., if \(v = u_j\) for some \(j\) (see Figure~\ref{fig:techniqueoftheorem1}).

We claim that \(c\) is a walk-nonrepetitive coloring of \(C\).
Clearly, any two adjacent vertices are colored with different colors.
Since $c'$ is a proper coloring, the two neighbors (in \(C\)) 
of $u_j$ are colored with different colors,
for every \(j\in \{1,\ldots,k'\}\).
If \(v\in V(C')\) is an asymmetrical vertex, then either \(v\) has a neighbor (in \(C\)) with color \(4\)
and another colored with color in \(\{1,2,3\}\),
or \(v\) has two distinct neighbors with distinct colors in \(\{1,2,3\}\).
Moreover, since every symmetrical vertex of \(C'\) is covered by \(S'\),
if $v\in V(C')$ is a symmetrical vertex,
then \(v\) has exactly one neighbor in $V(C)\setminus V(C')$,
and hence has precisely one neighbor with color in \(\{1,2,3\}\),
and one neighbor with color \(4\).
Therefore $c$ is a distance-$2$ coloring.

Now, suppose that there is a path $P=w_1\cdots w_{2t}$
for which \(c\) is a repetitive coloring.
By supressing in \(P\) the vertices in \(V(C)\setminus V(C')\),
i.e., replacing the path \(v_{2i-1}u_jv_{2i}\) by the edge \(v_{2i-1}v_{2i}\),
we obtain a path \(P'\) in \(C'\)
for which \(c'\) is a repetitive coloring,
a contradiction.
Therefore, \(c\) is a path-nonrepetitive coloring,
and hence, by Theorem~\ref{theorem:characterizing-wnrc-of-cycles},
\(c\) is a walk nonrepetitive \(4\)-coloring of \(C\)
as desired.
\end{proof}

\begin{figure}[H]
	\begin{center}
		\begin{tikzpicture}
		[scale=1,auto=left]   
			\tikzstyle{every node}=[circle, draw, fill=white,
			inner sep=2.3pt, minimum width=2.3pt]
		\node[fill = red, label=90:\small $1$]  (a) at (2.25246,3.17043) {};
  	\node[fill = green, label=135:\small $2$]  (b) at (0.96027,2.41317) {};
    	\node[fill = blue, label=195:\small $3$]  (c) at (0.96224,0.90956) {};
        \node[fill = red, label=-90:\small $1$]  (d) at (2.26959,0.16616) {};
        \node[fill = blue, label=-45:\small $3$]  (e) at (3.56951,0.91938) {};
        \node[fill = green, label=45:\small $2$]  (f) at (3.55989,2.42054) {};

        \node[fill = red, label=90:\small $1$]  (a1) at (6.45246,3.17043) {};
  	\node[fill = green, label=135:\small $2$]  (a2) at (5.16027,2.41317) {};
    	\node[fill = blue, label=195:\small $3$]  (a3) at (5.16024,0.90956) {};
        \node[fill = red, label=-90:\small $1$]  (a4) at (6.46959,0.16616) {};
        \node[fill = blue, label=-45:\small $3$]  (a5) at (7.76951,0.91938) {};
        \node[fill = green, label=45:\small $2$]  (a6) at (7.75989,2.42054) {};

        \node[fill = yellow, label=180:\small $4$]  (g) at (5.161255,1.6613385) {};
        \node[fill = yellow, label=-45:\small $4$]  (h) at (7.11955,0.54277) {};
        \node[fill = yellow, label=45:\small $4$]  (i) at (7.106175,2.795485) {};

		\foreach \from/\to in {a/b,b/c,c/d,d/e,e/f,a/f,a1/a2,a2/g,g/a3,a3/a4,a4/h,h/a5,a5/a6,a6/i,i/a1},  
		\draw (\from) -- (\to); 
		\end{tikzpicture}   
	\end{center}
	\caption{A walk-nonrepetitive coloring of $C_9$ obtained from a path-nonrepetitive $3$-coloring of $C_6$ by subdividing three edges and coloring the new vertices with yellow.}
	\label{fig:techniqueoftheorem1}
	\end{figure}
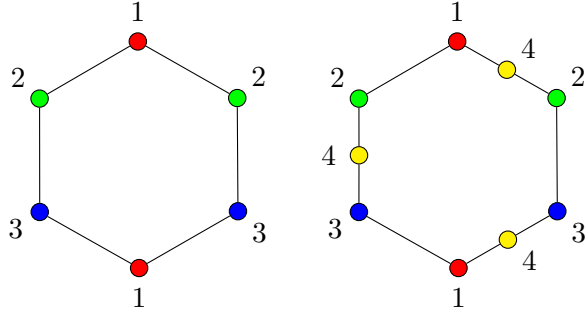

\section{Stroll-nonrepetitive colorings of paths and cycles} 

Recall that a walk \(w = v_1\cdots v_{2t}\) is a \emph{stroll} 
if \(v_i \neq v_{i+t}\) for every \(i\in\{1,\ldots,t\}\),
and that a coloring \(c\colon V(G)\to \{1,\ldots,k\}\) of the vertices of a graph \(G\)
is \emph{stroll-nonrepetitive} if 
for every stroll \(v_1 \cdots v_{2t}\)
the sequence \(c(v_1) \cdots c(v_{2t})\)
is nonrepetitive.
In other words, 
\(c\) is a stroll-nonrepetitive coloring if the sequence of colors induced by a walk \(v_1\cdots v_{2t}\) is repetitive only when \(v_i = v_{i+t}\) for some \(i\in\{1,\ldots,t\}\).
Finally, recall that the stroll chromatic number of \(G\) is denoted by \(\rho(G)\).

In this section, we prove Theorem~\ref{thm:snrc-paths}, 
which says that when \(n\geq 4\), we have \(\rho(P_n) = 3\) if \(n\leq 21\),
and \(\rho(P_n) = 4\) otherwise.
We use the following observation. 
Suppose we color \(P_n\) so that color \(1\) appears only once, say on a vertex \(u\).
If \(w = v_1 \cdots v_{2t}\) is a repetitive walk that contains \(u\), say \(v_i = u\), where \(1\leq i\leq t\), then we have \(v_{i+t} = u\),
and hence \(w\) is not a stroll.
A consequence of this observation is that every proper coloring of \(P_3\) is a stroll-nonrepetitive coloring.
The following result presents the stroll chromatic number of paths of order at most \(21\).

\begin{proposition}\label{prop:small-paths-snc}
If \(4\leq n\leq 21\), then \(\rho(P_n) = 3\)
\end{proposition}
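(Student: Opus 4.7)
The plan is in two parts: a short lower bound argument, and exhibiting a single stroll-nonrepetitive $3$-coloring of $P_{21}$ from which all other cases follow.

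For the lower bound $\rho(P_n)\geq 3$ when $n\geq 4$, I would argue as follows. Suppose for contradiction that $c\colon V(P_n)\to\{1,2\}$ is stroll-nonrepetitive. Since every edge $v_iv_{i+1}$ is a path, and hence a stroll, $c$ must be proper, i.e., alternating. But then the subpath $v_1v_2v_3v_4$ is itself a stroll whose color sequence has the form $abab$, which is repetitive --- a contradiction.

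For the upper bound $\rho(P_n)\leq 3$ when $4\leq n\leq 21$, I would first observe that stroll-nonrepetitivity is inherited by subpaths: every walk in a subpath is a walk in the ambient path, and both the condition $v_i\neq v_{i+t}$ and the condition $c(v_i)=c(v_{i+t})$ are preserved under restriction. Hence it suffices to exhibit a single stroll-nonrepetitive $3$-coloring of $P_{21}$, and then restrict it to the first $n$ vertices for each $4\leq n\leq 21$. I would produce such a coloring by a computer search analogous to the ILP-based search used in Section~\ref{sec:wnrc-cycles}, and record its values in a table in the style of Table~\ref{TableWalkNonRepetitiveCycleGraphs}.

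The main obstacle is producing such a coloring and verifying it mechanically, since walks in a path may be arbitrarily long due to back-and-forth motion. I would organise the verification as a finite reachability check in an auxiliary digraph $H$ whose vertices are ordered pairs $(x,y)\in V(P_{21})^2$ with $c(x)=c(y)$ and $x\neq y$, and whose arcs $(x,y)\to (x',y')$ are exactly the pairs with $xx',yy'\in E(P_{21})$: a repetitive stroll $v_1\cdots v_{2t}$ corresponds precisely to a walk in $H$ from $(v_1,v_{t+1})$ to some $(v_t,v_{2t})$ satisfying the additional constraint $v_tv_{t+1}\in E(P_{21})$, which is a finite-state condition. The delicacy of the construction is that $\rho(P_{22})\geq 4$ (established in the next part of the paper), so $n=21$ is exactly the threshold and the $3$-coloring must use its three colors as efficiently as possible; this is precisely why an ad hoc combinatorial recipe is unlikely and why I expect the coloring to be produced by ILP search.
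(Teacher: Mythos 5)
Your overall skeleton is the same as the paper's: establish \(\rho(P_n)\geq 3\) for \(n\geq 4\) (in the paper this is implicit via \(\pi(P_n)\leq\rho(P_n)\) and Thue's theorem; your direct two\nobreakdash-coloring argument is fine and complete), observe that stroll-nonrepetitivity passes to subpaths, and reduce everything to exhibiting one stroll-nonrepetitive \(3\)-coloring of \(P_{21}\). The reduction to the pair digraph \(H\) is also sound: a repetitive stroll \(v_1\cdots v_{2t}\) is exactly a walk \((v_1,v_{t+1}),\ldots,(v_t,v_{2t})\) in \(H\) together with the linking condition \(v_tv_{t+1}\in E(P_{21})\), and since that condition only constrains the two endpoints of the \(H\)-walk, nonexistence of a repetitive stroll is a finite reachability statement that a computer (or a patient reader) can certify.

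The genuine gap is that you never produce the coloring, and the coloring is the entire content of the upper bound: ``I would produce such a coloring by ILP search'' is a plan for a proof, not a proof. As written, your argument establishes only that \emph{if} a stroll-nonrepetitive \(3\)-coloring of \(P_{21}\) exists then \(\rho(P_n)=3\) for \(4\leq n\leq 21\), which is not the proposition. The paper closes exactly this gap by writing down the explicit word \(121312321323123213121\) (Figure~\ref{fig:P21-snc}) and then verifying it by hand rather than by exhaustive search: it shows that any repetitive stroll \(W\) would have to avoid \(v_{11}\) (because the local color pattern around \(v_{11}\) forces \(w_i=v_{11}\Rightarrow w_{i+t}=v_{11}\), violating the stroll condition), hence lives in one half of the path, then similarly cannot use \(v_7\), and is finally squeezed into too few vertices to be a repetitive stroll at all. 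So your expectation that ``an ad hoc combinatorial recipe is unlikely'' is not borne out: the chosen coloring has enough structure (colors whose occurrences are uniquely identifiable from their neighborhoods) that a short human-checkable argument works. To complete your proof you must either state a concrete coloring and run (and report) the \(H\)-reachability check, or supply a structural verification of the kind the paper gives.
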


\begin{proof}
Since every path of order at most \(20\) is a subpath of \(P_{21}\),
it suffices to prove that \(\rho(P_{21}) = 3\).
Let \(P_{21} = v_1\cdots v_{21}\), and consider the coloring \(c\colon V(P_{21}) \to \{1,2,3\}\)
such that 
\(
    c(v_1) c(v_2) \cdots c(v_{21}) = 121312321323123213121
\)
(see Figure~\ref{fig:P21-snc}).
We prove that \(c\) is stroll-nonrepetitive.
Suppose, for a contradiction, that there is a stroll $W=w_1 \cdots w_{2t}$ in which \(c\) induces a repetitive coloring.
We prove a few claims.

 \begin{figure}[h]
 	\begin{center}
 		\begin{tikzpicture}
 		[scale=.75,auto=left]   
 			\tikzstyle{every node}=[circle, draw, fill=white,
 			inner sep=2.3pt, minimum width=2.3pt]
 		\node[fill = red,label=above: $1$]  (a) at (0,0) {};   
 		\node[fill = green,label=above: $2$]  (b)  at (1,0)  {};
 		\node[fill = red,label=above: $1$]  (c) at (2,0)  {};
 		\node[fill = blue,label=above: $3$]  (d) at (3,0)  {};
 		\node[fill = red,label=above: $1$]  (e) at (4,0)  {};
 		\node[fill = green,label=above: $2$]  (f) at (5,0)  {};
 		\node[fill = blue,label=above: $3$]  (g) at (6,0)  {};
 		\node[fill = green,label=above: $2$]  (h) at (7,0)  {};
 		\node[fill = red,label=above: $1$]  (i) at (8,0)  {};
 		\node[fill = blue,label=above: $3$]  (j) at (9,0)  {};
 		\node[fill = green,label=above: $2$]  (k) at (10,0)  {};
 		\node[fill = blue,label=above: $3$]  (l) at (11,0)  {};
 		\node[fill = red,label=above: $1$]  (m) at (12,0)  {};
 		\node[fill = green,label=above: $2$]  (n) at (13,0)  {};
 		\node[fill = blue,label=above: $3$]  (o) at (14,0)  {};
 		\node[fill = green,label=above: $2$]  (p) at (15,0)  {};
 		\node[fill = red,label=above: $1$]  (q) at (16,0)  {};
 		\node[fill = blue,label=above: $3$]  (r) at (17,0)  {};
 		\node[fill = red,label=above: $1$]  (s) at (18,0)  {};
 		\node[fill = green,label=above: $2$]  (t) at (19,0)  {};
 		\node[fill = red,label=above: $1$]  (u) at (20,0)  {};

 		\foreach \from/\to in {a/b,b/c,c/d,d/e,e/f,f/g,g/h,h/i,i/j,j/k,k/l,l/m,m/n,n/o,o/p,p/q,q/r,r/s,s/t,t/u} 
 		\draw (\from) -- (\to);

 		\end{tikzpicture}   
 	\end{center}
 	\caption{A stroll-nonrepetitive coloring of $P_{21}$.} 
 	\label{fig:P21-snc}
 	\end{figure}
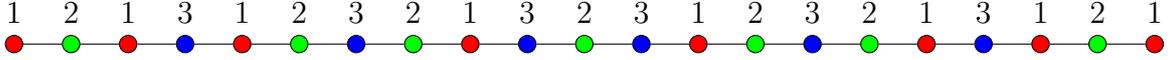

\begin{claim}\label{claim:v11-notin-V(W)}
\(v_{11} \notin V(W)\).
\end{claim}

\begin{proof}
Suppose that \(w_i = v_{11}\) for some \(i\in\{1,\ldots, t\}\).
We claim that, for \(j\in\{1,\ldots, t\}\), either \(w_j = v_{11}\) whenever \(j\) is odd,
or \(w_j = v_{11}\) whenever \(j\) is even.
For that, we prove that if  \(i+2\leq t\), then \(w_{i+2} = v_{11}\),
and if \(i-2\geq 1\), then \(w_{i-2} = v_{11}\).
Suppose that \(i+2\leq t\),
and note that \(c(v_{11}) = 2\) and \(c(v_{10}) = c(v_{12}) = 3\).
Since \(w_{i+t} \neq w_i\), we must have $w_{i+t}\in\{v_6,v_8,v_{14},v_{16}\}$.
Since \(w_{i+1}\in\{v_{10},v_{12}\}\), we have \(c(w_{i+t+1}) = c(w_{i+1}) = 3\),
and hence \(w_{i+t+1}\in \{v_7,v_{15}\}\) because \(v_7\) (resp. \(v_{15}\)) is the only neighbor of \(v_6\) and \(v_8\) (resp.~\(v_{14}\) and~\(v_{16}\)) that is colored with~\(3\).
Now, since \(w_{i+t+1}\in \{v_7,v_{15}\}\) and 
\(c(v_6) = c(v_8) = c(v_{14}) = c(v_{16}) = 2\),
we must have \(c(w_{i+t+2}) = 2\).
Since \(c\) induces a repetitive coloring in \(W\),
we must have \(c(w_{i+2}) = 2\),
and since \(w_{i+1}\in\{v_{10},v_{12}\}\) and  the only neighbor of 
\(v_{10}\) and \(v_{12}\) colored with \(2\) is  \(v_{11}\),
we have \(w_{i+2} = v_{11}\) as desired.
Analogously, if \(i-2\geq 1\), then \(c(w_{i-1})= 3\) and \(w_{i-2} = v_{11}\).

We conclude that \(w_1,\ldots,w_t \in \{v_{10},v_{11},v_{12}\}\).
Then \(w_{t-1} = v_{11}\) or \(w_{t} = v_{11}\)
and \(w_{t+1}\) or \(w_{t+2}\) is in \(\{v_6,v_8,v_{14},v_{16}\}\).
Then a subsequence \(w'_1w'_2\cdots w'_\ell\) of \(w^* = w_{t-1}w_tw_{t+1}w_{t+2}\) starts in \(v_{11}\) and ends in a 
vertex in \(\{v_6,v_8,v_{14},v_{16}\}\).
This can only happen if \(w'_1w'_2\cdots w'_\ell = w^*\) and
either \(w^* = v_{11}v_{10}v_9v_8\) 
or \(w^* = v_{11}v_{12}v_{13}v_{14}\).
By symmetry, assume that \(w^* = v_{11}v_{12}v_{13}v_{14}\).
Since \(c(w_{t+1}) = c(v_{13}) = 1\), we have \(c(w_1) = 1\),
a contradiction.
\end{proof}

By Claim~\ref{claim:v11-notin-V(W)}, either \(V(W)\subseteq \{v_1,\ldots,v_{10}\}\) of \(V(W)\subseteq\{v_{12},\ldots,v_{21}\}\).
By symmetry, assume \(V(W)\subseteq \{v_1,\ldots,v_{10}\}\).

\begin{claim}\label{claim:v7-notin-V(W)}
\(v_{7} \notin V(W)\).
\end{claim}

\begin{proof}
Suppose that \(v_7\in V(W)\),
and let \(w_i = v_7\) for \(i\in \{1,\ldots, t\}\).
If \(i < t\), then \(c(w_{i+1}) = 2\),
and hence \(c(w_{i+t+1}) = 2\).
Now, since \(c(w_{i+t}) = c(w_i) = 3\) and \(c(w_{i+t+1}) = 2\),
we have \(w_{i+t} = v_7\), a contradiction.
Otherwise, if \(i = t\), then \(c(w_{i-1}) = 2\),
and hence \(c(w_{i+t-1}) = 2\).
Now, since \(c(w_{i+t}) = 3\) and \(c(w_{i+t-1}) = 2\),
we have \(w_{i+t} = v_7\), a contradiction.
\end{proof}

By Claim~\ref{claim:v7-notin-V(W)}, we conclude that \(V(W)\subseteq\{v_8,v_9,v_{10}\}\) or \(V(W)\subseteq\{v_1,\ldots,v_6\}\).
Since three vertices cannot contain a repetitive stroll,
we have \(V(W)\subseteq\{v_1,\ldots,v_6\}\);
and since \(v_4\) is the unique vertex in \(\{v_1,\ldots,v_6\}\) colored with \(3\),
we have \(v_4\notin V(W)\), and \(V(W)\subseteq\{v_1,v_2,v_3\}\), a contradiction.
\end{proof}

In what follows, we prove that \(\rho(P_{n}) = 4\) for every \(n\geq 22\).
Recall that, given a \(3\)-coloring \(c\) of a path or a cycle \(G\),
and a vertex \(v\) such that \(d(v) = 2\),
we say that \(v\) is \emph{symmetrical}
if its neighbors have the same color,
and \emph{asymmetrical} otherwise.
If \(G\) is a path (resp. cycle) \(v_1\cdots v_n\),
then we define the \emph{\(SA\)-sequence} of \(c\) 
as the sequence \(a_2 \cdots a_{n-1}\) (resp. \(a_1 \cdots a_n\))
in which \(a_i = S\) if \(v_i\) is symmetrical,
and \(a_i = A\) otherwise.
Note that a proper \(3\)-coloring \(c\) of \(G\) is characterized by the triple consisting of
(i) the color of \(v_1\), (ii) the color of \(v_2\),
and (iii) the \(SA\)-sequence of \(c\).
We assume, without loss of generality, 
that \(c(v_1) = 1\), \(c(v_2) = 2\),
and hence \(c\) is characterized solely by its \(SA\)-sequence.

Given an \(SA\)-sequence \(R\) of a \(3\)-coloring of a path or cycle \(G\), a \emph{subword} of \(R\) is any sequence of consecutive elements of \(R\). 
To prove that \(\rho(P_n) = 4\) when \(n \geq 22\),
we use the following lemma, 
that shows that some \(SA\)-sequences induce repetitive strolls,
and hence are forbidden subwords in any \(SA\)-sequence of a stroll-nonrepetitive coloring of a path.
In what follows, let \(\mathcal{H} = \{SS,AAAA,ASASA,AASAASAA,AAASAAASAAA\}\).
We say that a word \(W\) on the alphabet \(\{S,A\}\) is \emph{\(\mathcal{H}\)-free} if no subword of \(W\) is in \(\mathcal{H}\).

\begin{lemma}\label{3-colStrollNonRepetitiveBlocks}

The \(SA\)-sequence of a stroll-nonrepetitive \(3\)-coloring of any path is \(\mathcal{H}\)-free.
\end{lemma}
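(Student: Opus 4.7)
The plan is to establish the lemma by showing, for each word $W\in\mathcal{H}$, that whenever $W$ appears as a subword of the $SA$-sequence of a $3$-coloring $c$ of a path $v_1\cdots v_n$, there is a repetitive stroll. I would treat the five cases under a common recipe: pick the position $j\geq 2$ at which $W$ starts (so $v_{j-1}$ and a suitable forward window both exist), normalize by relabeling colors so that $c(v_j)=1$ and $c(v_{j+1})=2$, propagate colors through the window using properness together with the $S$/$A$ constraints, and then exhibit an explicit stroll whose color sequence is a perfect square. Note that if only two colors appear on the whole path, then every internal vertex is symmetrical, so $SS$ shows up automatically and is handled by its own case; thus in every other case the propagation is forced into using the third color exactly as prescribed.

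For the three short members this is essentially automatic. For $W=SS$ at position $i$, symmetry of $v_i$ and $v_{i+1}$ gives $c(v_{i-1})=c(v_{i+1})$ and $c(v_i)=c(v_{i+2})$, so the path $v_{i-1}v_iv_{i+1}v_{i+2}$ has color sequence $abab$ and is trivially a stroll. For $W=AAAA$ the propagation forces $v_{j-1},\ldots,v_{j+4}$ to be colored $3,1,2,3,1,2$, so this $6$-vertex path is repetitive with period $3$ and a stroll. For $W=ASASA$ the forced coloring of $v_{j-1},\ldots,v_{j+5}$ is $3,1,2,1,3,1,2$, and I would use the walk $v_{j-1}v_jv_{j+1}v_{j+2}v_{j+3}v_{j+4}v_{j+5}v_{j+4}$, whose color sequence $3,1,2,1,3,1,2,1$ is a period-$4$ square; the four comparisons $w_i\neq w_{i+4}$ are immediate from the vertex indices.

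The main obstacle is the two longest words, since for them no straight path through the pattern is repetitive. The trick is to use a walk that traverses the window forward and then backtracks a controlled number of steps, chosen so that the induced color sequence becomes a perfect square while the backtracked vertices land off-beat from their half-translates. Concretely, for $W=AASAASAA$ the propagated colors on $v_{j-1},\ldots,v_{j+8}$ are $3,1,2,3,2,1,3,1,2,3$, and the length-$12$ walk $v_{j-1}v_j\cdots v_{j+8}v_{j+7}v_{j+6}$ yields the period-$6$ repetitive sequence $3,1,2,3,2,1,3,1,2,3,2,1$; the six comparisons $w_i\neq w_{i+6}$ follow at once from listing the indices of the walk. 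For $W=AAASAAASAAA$ the propagated colors on $v_{j-1},\ldots,v_{j+11}$ are $3,1,2,3,1,3,2,1,3,1,2,3,1$, and the length-$16$ walk $v_{j-1}v_j\cdots v_{j+11}v_{j+10}v_{j+9}v_{j+8}$ yields the period-$8$ repetitive sequence $3,1,2,3,1,3,2,1,3,1,2,3,1,3,2,1$, with $w_i\neq w_{i+8}$ again clear by inspection. Once these strolls are written down, the remaining work is bookkeeping; the whole argument reduces to identifying the correct backtrack depths $0,0,1,2,3$ for the five patterns in $\mathcal{H}$.
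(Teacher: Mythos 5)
Your proposal is correct and follows essentially the same strategy as the paper: propagate the colors forced by properness and the $S$/$A$ pattern, then exhibit an explicit repetitive stroll for each word in $\mathcal{H}$ (the paper's witnesses backtrack at the start for the two longest words rather than only at the end, but your walks and color sequences all check out, including the strollness verifications). No gaps.
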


\begin{proof}
Let \(c\) be a stroll-nonrepetitive \(3\)-coloring of a path \(P\), 
and let \(R = a_2 \cdots a_{n-1}\) be a subword of the \(SA\)-sequence of \(c\).
Let \(P' = v_1 \cdots v_n\) be the subpath of \(P\) whose \(SA\)-sequence is \(R\).
Assume, without loss of generality, that \(c(v_1) = 1\) and \(c(v_2) = 2\).
We present a repetitive stroll induced by each of these sequences.
If \(R = SS\) (resp. \(R = AAAA\)),
then \(v_1v_2v_3v_4\) is a stroll that induces the repetitive sequence \(1 2 1 2\) (resp. \(v_1 \cdots v_6\) is a stroll that induces \(1 2 3 1 2 3\)),
a contradiction.
Now, if \(R = ASASA\), then \(c(v_1) \cdots c(v_7) = 1232123\) and
$v_1v_2v_3v_4v_5v_6v_7v_6$ is a stroll that induces the repetitive sequence $12321232$;
if \(R = AASAASAA\), then \(c(v_1) \cdots c(v_{10}) = 1231321231\) and
$v_2v_1v_2v_3v_4v_5v_6v_7v_8v_9v_{10}v_9$ is a stroll that induces the repetitive sequence $212313212313$;
and if \(R = AAASAAASAAA\), then \(c(v_1) \cdots c(v_{13}) = 1231213212312\) and
$v_3v_2v_1v_2v_3v_4v_5v_6v_7v_8v_9v_{10}v_{11}v_{12}v_{13}v_{12}$
is a stroll that induces the repetitive sequence $3212312132123121$.
\end{proof}

Now, we prove that there is no \(\mathcal{H}\)-free \(SA\)-sequence of order at least \(20\).

\begin{lemma}\label{lemma:longest-H-free-words}
The longest \(\mathcal{H}\)-free word has length \(19\).
\end{lemma}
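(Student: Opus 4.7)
The plan is to prove both directions: that a length-$19$ $\mathcal{H}$-free word exists, and that no $\mathcal{H}$-free word has length at least $20$.

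For the lower bound I would invoke earlier material directly. Proposition~\ref{prop:small-paths-snc} exhibits a stroll-nonrepetitive $3$-coloring of $P_{21}$, and Lemma~\ref{3-colStrollNonRepetitiveBlocks} implies that the $SA$-sequence of any stroll-nonrepetitive $3$-coloring of a path is $\mathcal{H}$-free. Since the $SA$-sequence of a $3$-coloring of $P_{21}$ has length $21-2=19$, this yields the desired example with no further verification.

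For the upper bound the approach is a block decomposition. Since $SS$ is forbidden, any $\mathcal{H}$-free word $w$ can be uniquely written as $w = A^{b_0} S A^{b_1} S \cdots S A^{b_k}$ with $b_0, b_k \geq 0$, $b_1,\ldots,b_{k-1}\geq 1$, and (by $AAAA$-freeness) $b_i \leq 3$ for every $i$; thus $|w| = k + \sum_i b_i$. The three remaining forbidden patterns translate into local constraints on $(b_i)$: \textbf{(d)} from $ASASA$, if $b_i=1$ with $1\leq i\leq k-1$ then some neighbor $b_{i\pm 1}$ equals $0$; \textbf{(e)} from $AASAASAA$, if $b_i=2$ with $1\leq i\leq k-1$ then some neighbor is at most $1$; \textbf{(f)} from $AAASAAASAAA$, no three consecutive blocks all equal $3$.

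The heart of the argument is the following cascade. Since $b_i=0$ is possible only at $i\in\{0,k\}$, constraint (d) forces $b_i\in\{2,3\}$ for $i\in\{2,\ldots,k-2\}$, and then (e) forces $b_i=3$ for $i\in\{3,\ldots,k-3\}$. If $k\geq 8$, the indices $3,4,5$ lie in this range, so $b_3=b_4=b_5=3$, contradicting (f); hence $k\leq 7$. For $k=7$ I cascade outward from the middle: $b_3=b_4=3$, so (f) gives $b_2,b_5\neq 3$, hence $b_2=b_5=2$; applying (e) to $b_2$ (with $b_3=3$) forces $b_1=1$, and then (d) forces $b_0=0$; symmetrically $b_6=1$ and $b_7=0$. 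This uniquely determines $(b_0,\ldots,b_7) = (0,1,2,3,3,2,1,0)$ and yields $|w| = 7 + 12 = 19$. For $k\leq 4$ the crude bound $|w| \leq k + 3(k+1) = 4k+3 \leq 19$ already suffices.

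The only step requiring genuine case work is $k\in\{5,6\}$, and I expect this to be the mildest obstacle. In each case I would apply the same cascading argument, branching on the values of the few interior blocks not yet forced (for $k=6$ we still have $b_3=3$ forced; for $k=5$ nothing is forced by the main cascade) and propagating (d), (e), (f) outward. The resulting short enumeration should show $|w|\leq 17$ for $k=5$ and $|w|\leq 18$ for $k=6$, completing the bound $|w|\leq 19$ and hence the lemma.
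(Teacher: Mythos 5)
Your proposal is correct and follows essentially the same route as the paper: decompose an \(\mathcal{H}\)-free word into maximal \(A\)-blocks separated by single \(S\)'s, translate each forbidden pattern into a local constraint on block sizes, and cascade these constraints to force the extremal profile \((0,1,2,3,3,2,1,0)\) of total length \(7+12=19\). The only genuine difference is your lower bound, obtained for free from Proposition~\ref{prop:small-paths-snc} and Lemma~\ref{3-colStrollNonRepetitiveBlocks} (the \(SA\)-sequence of the exhibited coloring of \(P_{21}\) is precisely the extremal word \(SASAASAAASAAASAASAS\)) where the paper simply exhibits that word; your deferred enumerations for \(k\in\{5,6\}\) are routine and the bounds you claim (\(17\) and \(18\)) are correct.
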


\begin{proof}
Suppose \(R\) is a word on the alphabet \(\{S,A\}\).
If \(R\) does not contain \(SS\) nor \(AAAA\) as subwords,
then \(R\) is of the form \(R_1SR_2S \cdots R_{\ell-1}SR_\ell\)
with \(R_1,R_\ell\in\{\emptyset,A,AA,AAA\}\) and \(R_i\in\{A,AA,AAA\}\) for \(i\in\{2,\ldots,\ell-1\}\).
Suppose that \(R_i = A\) for \(2\leq i\leq \ell-1\).
If \(R_{i-1}\neq \emptyset\) and \(R_{i+1}\neq \emptyset\), i.e., \(R_{i-1},R_{i+1}\in\{A,AA,AAA\}\),
then \(R_{i-1}SR_iSR_{i+1}\) contains the subword \(ASASA\).
Therefore, that either \(R_{i-1} = \emptyset\) or \(R_{i+1} = \emptyset\).
In other words, if \(R_i = A\), then \(i\in\{2,\ell-1\}\),
and hence \(R_3,\ldots,R_{\ell-2}\in\{AA,AAA\}\).
Analogously, if \(R_i = AA\) with \(i\in\{3,\ell-2\}\), then either \(R_{i-1} = A\) or \(R_{i+1} = A\),
otherwise we find \(AASAASAA\) as subword.
This implies that \(R_4 =\cdots =  R_{\ell-3} = AAA\),
but there is no~\(i\) for which \(R_{i-1} = R_i = R_{i+1} = AAA\),
otherwise we find \(AAASAAASAAA\) as subword.
Therefore, the longest \(\mathcal{H}\)-free word is \(SASAASAAASAAASAASAS\),
which has length~19.
\end{proof}

As a consequence of Lemmas~\ref{3-colStrollNonRepetitiveBlocks} and \ref{lemma:longest-H-free-words}, 
and K{\"u}ndgen and Pelsmajer's~\cite{kundgen2008nonrepetitive} result that \(\sigma(P_n)\leq 4\), we obtain the desired result.
\begin{corollary}\label{cor:long-paths-snc}
If \(n\geq 22\), then \(\rho(P_n) = 4\).
\end{corollary}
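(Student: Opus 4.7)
The plan is to establish both bounds by directly combining the two preceding lemmas with the result of K{\"u}ndgen and Pelsmajer mentioned earlier in the excerpt. The upper bound is essentially free: since $\sigma(P_n) \leq 4$ for every $n$ by~\cite{kundgen2008nonrepetitive}, and since every walk-nonrepetitive coloring is stroll-nonrepetitive, Equation~\eqref{eq:relation-btw-chromatic_numbers} yields $\rho(P_n) \leq \sigma(P_n) \leq 4$.

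For the lower bound, I would argue by contradiction. Suppose $n \geq 22$ and that $P_n$ admits a stroll-nonrepetitive $3$-coloring $c$. Since stroll-nonrepetitive colorings are in particular proper, and since every internal vertex of $P_n$ has degree $2$, the $SA$-sequence of $c$ is well defined and has length $n - 2 \geq 20$. By Lemma~\ref{3-colStrollNonRepetitiveBlocks}, this $SA$-sequence must be $\mathcal{H}$-free. However, Lemma~\ref{lemma:longest-H-free-words} states that any $\mathcal{H}$-free word has length at most $19$. This contradiction gives $\rho(P_n) \geq 4$, and combined with the upper bound yields $\rho(P_n) = 4$.

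There is really no obstacle here: the corollary is an immediate consequence of the infrastructure built in Lemmas~\ref{3-colStrollNonRepetitiveBlocks} and~\ref{lemma:longest-H-free-words}, which have already done the substantive work. The only minor point worth being explicit about is that a stroll-nonrepetitive coloring is automatically proper (as a stroll includes every edge traversed as a length-$2$ walk $uvu$ would not be a stroll, but actually the edge $uv$ as the walk $uv$ of length $2t = 2$ with $t = 1$ is a stroll since $u \neq v$, and repetitive would force $c(u) = c(v)$), so the $SA$-sequence is meaningful; this justifies the invocation of Lemma~\ref{3-colStrollNonRepetitiveBlocks} in the present context.
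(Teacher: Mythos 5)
Your proof is correct and follows essentially the same route as the paper: the upper bound from $\rho(P_n)\leq\sigma(P_n)\leq 4$, and the lower bound by noting that a stroll-nonrepetitive $3$-coloring would have an $SA$-sequence of length $n-2\geq 20$, contradicting Lemmas~\ref{3-colStrollNonRepetitiveBlocks} and~\ref{lemma:longest-H-free-words}. Your added remark that stroll-nonrepetitiveness forces a proper coloring (so the $SA$-sequence is well defined) is a valid and harmless clarification.
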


\begin{proof}
Let \(P_n\) be a path with \(n\geq 22\).
Since \(\rho(G) \leq \sigma(G)\) for every graph \(G\), we have \(\rho(P_n)\leq \sigma(P_n) \leq 4\).
Suppose \(\rho(P_n) \leq 3\), and let \(c\) be a stroll-nonrepetitive \(3\)-coloring of \(P_n\).
Since \(n\geq 22\), the \(SA\)-sequence of \(P_n\)
has length at least \(20\), and by Lemma~\ref{lemma:longest-H-free-words},
contains a word in \(\mathcal{H}\), a contradiction to Lemma~\ref{3-colStrollNonRepetitiveBlocks}.
\end{proof}

Theorem~\ref{thm:snrc-paths} then follows from Proposition~\ref{prop:small-paths-snc} and Corollary~\ref{cor:long-paths-snc}.
In what follows, we prove Theorem~\ref{thm:snrc-cycles},
which settles the stroll-nonrepetitive chromatic number of cycles.

\snrcycles*

\begin{proof}
    Let \(C_n = v_1 \cdots v_n v_1\).
    First, note that any proper \(3\)-coloring without empty colors of \(C_3\) and of \(C_4\) has two colors that appears only once,
    and hence is a stroll-nonrepetitive coloring.
    Moreover no \(2\)-coloring of \(C_4\) is stroll-nonrepetitive.
    Therefore, \(\rho(C_3) = \rho(C_4) = 3\).
    Thus, we may assume \(n\geq 5\).
    Since \(\rho(C_5) \geq \pi(C_5)\) and \(\pi(C_5) = 4\) by Theorem~\ref{thm:currie-path-nonrep-cycle}, we have \(\rho(C_5)\geq 4\).
    Now, note that we can find a proper \(4\)-coloring of \(C_5\)
    in which only one color appears more than once.
    Then it has no repetitive stroll.
    
    Now, to prove that \(\rho(C_6) = 3\), consider the coloring given by \(c(v_1)\cdots c(v_6) = 212313\).
    Since one occurrence of \(1\) is adjacent only to vertices with color \(2\)
    and the other occurrence of \(1\) is adjacent only to vertices with color \(3\),
    no repetitive stroll contains a vertex with color \(1\).
    But, as seen before, paths with three vertices do not contain strolls.
    
    To prove that \(\rho(C_7) \leq 4\), color \(v_1,\ldots, v_6\) with 
    a stroll-nonrepetitive \(3\)-coloring of \(P_6\)
    and color \(v_7\) with color \(4\).
    Since \(4\) appears only once, no repetitive stroll contains \(v_7\).
    Then every stroll is contained in \(v_1,\ldots, v_6\), 
    and hence is nonrepetitive.
    Now, analogously to the case of \(C_5\),
    by Theorem~\ref{thm:currie-path-nonrep-cycle}, we have \(\rho(C_7)\geq 4\),
    which concludes this case.
    
    To prove that \(\rho(C_8) = 3\), consider the coloring given by \(c(v_1) \cdots c(v_8) = 12132123\).
    Let \(w = w_1 \cdots w_{2t}\) be a stroll in \(C_8\). 
    If \(w\) does not contain \(v_8\), then \(w\) cannot contain \(v_4\) which is the only other vertex with color \(3\), and hence \(w\) is in a path with three vertices, a contradiction.
    Therefore, we may assume that \(w\) contains \(v_4\) and \(v_8\) and \(t\geq 4\).
    Suppose that \(w_i = v_4\) and \(w_{i+t} = v_8\).
    Moreover, we assume that \(i\) is the smallest index with such that that \(w_i = v_4\) and \(w_{i+t} = v_8\).
    Suppose that \(i\geq 3\).
    Suppose, moreover, that \(w_{i-1} = v_3\).
    Then, by the minimality of \(i\), \(w_{i-2} = v_2\).
    This implies that \(c(w_{i-2})c(w_{i-1})c(w_i) = c(w_{i+t-2})c(w_{i+t-1})c(w_{i+t}) = 213\),
    but then \(w_{i+t-2} = v_2\), and \(w\) is not a stroll.
    Analogously, if \(w_{i-1} = v_5\), we get \(w_{i-2} = w_{i+t-2} = v_6\),
    again a contradiction.
    Thus, we may assume \(i\leq 2\), and hence \(i+2 \leq 4 \leq t\).
    In this case we use a similar argument with \(c(w_{i})c(w_{i+1})c(w_{i+2})\):
    if \(w_{i+1} = v_3\) then \(w_{i+2} = v_2 = w_{i+t+2}\),
    and if \(w_{i+1} = v_5\), then \(w_{i+2} = v_6 = w_{i+t+2}\).
    In both cases we reach a contradiction.

    Now, suppose \(n\geq 9\).
    By Theorem~\ref{thm:wnrc-cycles}, we have \(\sigma(C_n) \leq 4\).
    Suppose, for a contradiction, that \(C_n\) has a stroll-nonrepetitive \(3\)-coloring \(c\).
    We use Lemma~\ref{3-colStrollNonRepetitiveBlocks} on subpaths of \(C_n\).
    Let \(R = a_1\cdots a_n\) be the \(SA\)-sequence of \(c\).
    By Lemma~\ref{3-colStrollNonRepetitiveBlocks} the \(SA\)-sequence of any subpath of \(C_n\) 
    is \(\mathcal{H}\)-free.
    Then since \(C_n\) is a cycle, we may assume that \(R\) is of the form \(R_1SR_2S \cdots R_\ell S\)
    with \(R_i\in\{A,AA,AAA\}\) for \(i\in\{1,\ldots,\ell\}\).
    In what follows,  sums and subtractions in the indexes are taken modulo \(\ell\),
    i.e., \(R_\ell = R_0\) and \(R_{\ell+1} = R_1\).
    Since \(n\geq 9\) we have \(\ell\geq 3\).
    Now, for \(i\in\{1,\ldots,\ell\}\) we have \(R_i\neq A\) 
    otherwise \(R_{i-1}SR_iSR_{i+1}\) contains \(ASASA\) as a subword corresponding to a subpath of \(C_n\) of order \(7\), a contradiction to Lemma~\ref{3-colStrollNonRepetitiveBlocks}.
    Now, if \(R_i = AA\), then \(R_{i-1}SR_iSR_{i+1}\) contains \(AASAASAA\),
    which does not correspond to a subpath of \(C_n\) only when \(n = 9\).
    But if \(n = 9\), we have \(c(v_1) \cdots c(v_9) = 123132123\) and \(c(v_8)c(v_9)c(v_1)c(v_2)c(v_3)c(v_4) = 231231\) is a repetitive path.
    Therefore, we can assume that \(R_i = AAA\) for every \(i\in\{1,\ldots,\ell\}\).
    In particular, this implies that \(n \equiv 0\pmod{4}\) and, since \(\ell\geq 3\) we have \(n\geq 12\).
    Note that if \(n\geq 16\), then \(R\) contains \(AAASAAASAAA\) as a subword, a contradiction to Lemma~\ref{3-colStrollNonRepetitiveBlocks}.
    Therefore, we may assume \(n = 12\).
    Suppose, without loss of generality, that \(a_2 \cdots a_{11} = AAASAAASAA\).
    Thus, assuming \(c(v_1)c(v_2) = 12\), we have \(c(v_1) \cdots c(v_{12}) = 123121321231\),
    and hence\(c(v_{12})c(v_1) = 11\), a contradiction. 
    This completes the proof.
\end{proof}

\section{Concluding remarks}

In this paper we settled the walk- and stroll-nonrepetitive chromatic numbers of paths and cycles.
We hope that the strategies used here can be extended to other classes of graphs
as graphs with maximum degree \(3\) or general trees.
The chromatic numbers found here may be also useful for improving other variations of chromatic number on graphs 
consisting of a circular chain of isomorphic blocks,
as, for example, circular ladders~\cite{botler2023coloraccao}, powers of cycles, and special snarks.

\bibliographystyle{abbrv}\small
\bibliography{localbibliography}

\end{document}